\newcommand{\R}{\mathbb{R}}
\newcommand{\Rn}{\mathbb{R}^N}
\newcommand{\ep}{\varepsilon}
\newcommand{\B}{\mathcal{A}}
\newcommand{\Ae}{\mathcal{A}_{\ep}}
\newcommand{\Es}{\mathbb{E}_{S_{I},S_{II}}^{(x_0,t_0)}}
\newcommand{\Esy}{\mathbb{E}_{S_{I}^x,S_{II}}^{(x,t)}}
\newcommand{\dis}{\displaystyle}
\newcommand{\SN}{\mathbb{S}^{N}}
\newtheorem{te}{Theorem}[section]
\newtheorem{lem}[te]{Lemma}
\newtheorem{co}[te]{Corollary}
\theoremstyle{remark}
\newtheorem{re}[te]{Remark}
\theoremstyle{definition}
\newtheorem{de}[te]{Definition}
\theoremstyle{example}
\newtheorem{exa}[te]{Example}
\title[Tug-of-War games and parabolic problems]{Tug-of-War games and parabolic problems with spatial and time dependence}
\author[L. M. Del Pezzo and J. D. Rossi]
{Leandro M. Del Pezzo and Julio D. Rossi}
\address{Leandro M. Del Pezzo \hfill\break\indent
CONICET and Departamento  de Matem{\'a}tica, FCEyN, Universidad de Buenos Aires,
\hfill\break\indent Pabellon I, Ciudad Universitaria (1428),
Buenos Aires, Argentina.}
\email{{\tt ldpezzo@dm.uba.ar}}
\address{Julio D. Rossi \hfill\break\indent
Departamento  An{\'a}lisis Matem{\'a}tica, Universidad de Alicante,
\hfill\break\indent Ap. correo 99, 03080, Alicante, Spain.
}
\email{{\tt julio.rossi@ua.es}}
\thanks{Leandro M. Del Pezzo was supported by CONICET (Argentina) PIP 5478/1438 tand Julio D. Rossi by MTM2008-05824, (Spain).    }
\begin{document}
\maketitle

\begin{abstract}
In this paper we use probabilistic arguments (Tug-of-War games) to
obtain existence of viscosity solutions to a parabolic problem of
the form
$$
\begin{cases} K_{(x,t)}(D u )u_t (x,t)= \frac12 \langle
D^2 u  J_{(x,t)}(D u ),J_{(x,t)}(D u) (x,t)\rangle
 &\mbox{in } \Omega_T,\\
  u(x,t)=F(x)&\mbox{on }\Gamma,
\end{cases}
$$
where $\Omega_T=\Omega\times(0,T]$  and
$\Gamma$ is its parabolic boundary.
This problem can be viewed as a version with spatial and time dependence of
the evolution problem given by the infinity Laplacian,
$ u_t (x,t)=  \langle
D^2 u (x,t) \frac{D u}{|Du|} (x,t),\, \frac{D u}{|Du|}
(x,t)\rangle$.
\end{abstract}

\section{Introduction}

Our goal in this article is to look for parabolic PDEs that may
arise as continuous values of Tug-of-War games when one takes into
account the number of plays that the players play and considering
sets of possible movements that may depend on space and time. In
this way we obtain what we can call a natural way of defining a
{\it parabolic problem involving the infinity Laplacian with
spatial and time dependence}.

\medskip

Solutions to the infinity Laplacian
$	
 \langle D^2 u (x,t) \frac{D u}{|Du|} (x,t),\,
\frac{D u}{|Du|} (x,t)\rangle=0
$
appear naturally when one
considers absolutely minimizing Lipschitz extensions (AMLE) of a
Lipschitz function $F$ defined on the boundary; see the survey
\cite{ACJ} and \cite{Jensen} (see also \cite{AM,ChP,JL,JLMeigen, JLM}).
This equation (and also the $p-$Laplacian) was related
to continuous values of Tug-of-War games, see \cite{PSSW}. See
also \cite{AM,BEJ,ChGAR,KS,mpr-dpp,mpr-definition,mpr-parabolic, Peres,PS} and, for numerical approximations, \cite{Oberman}.

The evolution problem given by the infinity Laplacian, is given by
\begin{equation}\label{yo}
v_t (x,t)=  \Big\langle D^2 v (x,t) \frac{D v}{|Dv|} (x,t),\,
\frac{D v}{|Dv|} (x,t)\Big\rangle.
\end{equation}
For existence, asymptotic behaviour and further properties of the solutions we refer to \cite{juu2,juu}.

Recently, see \cite{mpr-parabolic}, probabilistic methods (based
on Tug-of-War games) where used to obtain mean value
characterizations of solutions to parabolic PDEs, including the equation \eqref{yo}.
The Tug-of-War game related to this equation, see
\cite{mpr-parabolic}, can be briefly described as follows:
a Tug-of-War game is a two-person, zero-sum game, that
is, two players are in contest and the total earnings of one are
the losses of the other.
Let $T$ be a positive constant and $\Omega$ be a bounded smooth
open subset of $\Rn$. We consider the parabolic cylinder
$\Omega_T=\Omega\times(0,T]$ with the parabolic boundary
$\Gamma=\partial\Omega\times[0,T]\cup\Omega\times\{0\}$ and, for a
fixed $\eta>0$ we define a strip around the parabolic boundary
$
\Gamma_\eta =\Omega_\eta\times(-\eta^2,0]\cup\Theta_\eta\times(0,T],
$
where
$
\Omega_\eta=\left\{x\in\Rn  \colon \mbox{dist}(x,\Omega)\le \eta \right\}
$
and
$
\Theta_{\eta}=\{x\in\Rn\setminus\Omega\colon\mbox{dist}(x,\partial\Omega)\le
\eta\}$.
Let $F:\Gamma_\eta \rightarrow\R$ be a Lipschitz continuous
function (the final payoff function).
The rules of the game are the following: At the initial time,
$t_0$, a token is placed at a point $x_0\in\Omega$. Then, a (fair)
coin is tossed and the winner of the toss is allowed to move the
game position to any $x_1\in\overline{B_\epsilon(x_0)}$ and the
time is decreased by $c\epsilon^2$ ($c$ is just a normalizing
constant, see \cite{mpr-parabolic}). At each turn, the coin is
tossed again, and the winner chooses a new game state $x_k\in
\overline{B_\epsilon(x_{k-1})}$ while time decreases at each time $c\epsilon^2$.
Once the token has reached some $(x_\tau, \tau)\in\Gamma_\eta$,
the game ends and the first player earns $F(x_\tau, \tau)$ (while
the second player earns $-F(x_\tau, \tau)$). This game has a
expected value $u_\epsilon (x_0)$ (called the value of the game)
that verifies the Dynamic Programming Principle (DPP),
\begin{equation}\label{DPP.bolas}
 u_\epsilon (x,t)=\frac12 \sup_{y\in
\overline{B_\epsilon(x)}}u_\epsilon(y, t-c\epsilon^2) +\frac12 \inf_{y\in
\overline{B_\epsilon(x)}}u_\epsilon(y, t-c\epsilon^2)
\end{equation}
for every $(x,t)\in \Omega\times (0,T)$. In the above equation, it
is understood that $u_\epsilon (x,t) = F(x,t)$ for
$(x,t)\in\Gamma_\eta$. This formula can be intuitively explained
from the fact that the first player tries to maximize the expected
outcome (and has probability $1/2$ of selecting the next state of
the game) while the second tries to minimize the expected outcome
(and also has probability $1/2$ of choosing the next position).
As $\epsilon \to 0$ we have that
$
u_\epsilon \rightrightarrows v
$
uniformly and this limit $v$ (that is called the continuous value
of the game) turns out to be a viscosity solution to \eqref{yo}
with the Dirichlet boundary condition $v(x,t) = F(x,t)$, for $(x,t) \in \Gamma$.
The fact that the limit is a solution to the equation can be
intuitively explained as follows: for a smooth function $\phi$
with non-zero gradient the maximum in $\overline{B_\epsilon(x)}$
is attained at a point on the boundary of the ball $\partial
{B_\epsilon(x)}$ that lies close to the direction of the gradient,
that is, the location of the maximum is close to $x+
\epsilon D\phi(x) / |D\phi(x)|$. Analogously the minimum is close
to $x -\epsilon D\phi(x) / |D\phi(x)|$ and hence the DPP, equation
\eqref{DPP.bolas}, for the smooth function $\phi$ reads as
$$
\begin{array}{l}
\displaystyle
\phi(x,t) - \phi (x, t-c\epsilon^2) \sim \frac12 \phi \Big(x+ \epsilon \frac{D\phi(x, t-c\epsilon^2) }{
|D\phi(x, t-c\epsilon^2) |}, t-c\epsilon^2\Big)\\[12pt]
\qquad \qquad \displaystyle  +\frac12
\phi \Big(x -
\epsilon \frac{ D\phi (x, t-c\epsilon^2) }{ |D\phi(x, t-c\epsilon^2) |},
t-c\epsilon^2\Big)- \phi (x, t-c\epsilon^2),
\end{array}
$$
that is a discretization of the equation. Note that the right hand
side is a discretization of the second derivative in the direction
of the gradient. This formal calculation can be fully justified
when one works in the viscosity sense, see \cite{mpr-parabolic}.



\medskip

As we have mentioned, our goal in this paper is to show that one
can obtain existence of viscosity solutions to more general
parabolic equations when one allows the possible movements of the
players. To be more precise,  our main concern in this paper is to answer the following
question:

\medskip

{\it What are the PDEs that can be obtained as continuous values
of Tug-of-War games when we replace the ball $\overline{B_\epsilon
(x)}$ with a more general family of sets $\mathcal{A}_\epsilon
(x,t)$ ?}

\medskip

To answer this question we have to assume certain conditions on
the family of sets $\mathcal{A}_\epsilon (x,t)$ and the way that
they behave as $\epsilon \to 0$ (see Section \ref{dotg} for
details). If we play the same game described before with the possible positions
given by the sets $\mathcal{A}_\epsilon (x,t)$  the DPP reads as
$$
u_{\epsilon}(x,t)=\frac{1}{2}
\sup_{(y,s)\in \mathcal{A}_\epsilon(x,t)}u_{\epsilon}(y,s)
+ \frac{1}{2} \inf_{(y,s)\in
\mathcal{A}_\epsilon (x,t)}u_{\epsilon}(y,s).
$$
Following our previous discussion for the case of balls we can
guess that the limit PDE as $\epsilon \to 0$ will depend on the
point at which a smooth function $\phi$ with non-zero gradient
attains its maximum (and its minimum) in
$\mathcal{A}_\epsilon(x,t)$. Our conditions on the sets
$\mathcal{A}_\epsilon(x,t)$ are such that there is a preferred
direction where the maxima and the minima of a smooth function
$\phi$ with non-zero gradient are closely located when $\epsilon
\to 0$. This preferred direction depends on the spatial location
and on the gradient of $\phi$ at that point. We call such
direction $J_{(x,t)} (D\phi (x,t))$.
Also, due to scaling properties of the sets there is a preferred time that depends on $x,t$ and $D \phi$, we call it  $K_{(x,t)}(D\phi(x,t))$. With this in mind our main result reads as follows:

\medskip

{\it Under adequate assumptions on the family of sets $\mathcal{A}_\epsilon(x,t)$ there is a uniform limit
(along a subsequence) as $\epsilon \to 0$ of the values of the game, $v$, that is a viscosity solution to
$$
K_{(x,t)}(Du(x,t))u_t (x,t)= \dfrac12 \langle
D^2 u (x,t) J_{(x,t)}(D u (x,t)),J_{(x,t)}(D u (x,t))\rangle
$$
in $ \Omega_T= \Omega \times (0,T]$, with boundary condition
 $ u(x,t)=F(x)$ on the parabolic boundary $\Gamma$.
}

Uniqueness for this general problem and regularity issues seem delicate and are left open.

\medskip

{\bf Organization of the paper.} In Section \ref{dotg}, we describe with some details
the Tug-of-War game, introduce the precise set conditions that we assume on the family
of the set $\Ae(x,t),$ state the DPP for our game and prove that the game has a value and the comparison principle for values of the game;
in Section \ref{uniconv} we prove that the $\ep-$value of the game converge uniformly to a continuous function; finally, in Section \ref{sect.eq.limite} we show that the limit is a viscosity solution to our parabolic equation.

\medskip

Throughout this paper, the points in $\Rn$ are denoted by
$x=(x^1,\dots,x^N),$ $|\cdot|$ denote the $2-$norm in $\Rn,$
$\langle \cdot,\cdot \rangle $ denote the usual inner product of
$\R^N,$ the ball of center $x_0\in\Rn$ and radius $\rho>0$  is
denoted by $B(x_0,\rho)$  and $\pi_1,\pi_2:\R^{N+1}\to\R$ denote
the projections with respect to the $x-$axis and $t-$axis
respectively. Finally, let $\SN$ denote the space of symmetric $N\times N$
matrices.

\section{Description of the game}\label{dotg}

Now, we describe the Tug-of-War game, following \cite{mpr-dpp,mpr-parabolic}.

Let $F:\Gamma_\eta\to\R$ be a bounded Borel function,
$F$ is called the final payoff function.

{\bf Tug-of-War game with spatial and time dependence.} A Tug-of-War game is a zero-sum game between two players (Player I and Player II). At the beginning a token is
placed at a point $(x_0,t_0)\in\Omega_T$ and we fix $\ep>0.$ Then, the players toss a fair coin and the winer decides a new game state
$(x_1,t_1)$ in a set $\Ae(x_0,t_0),$ that depends on the position $(x_0,t_0)$ and will be defined later.
Then, the coin is tossed again and the winer chooses a new game state $(x_2,t_2)\in \Ae(x_1,t_1).$ They continue playing the game
until the token hits the parabolic boundary strip $\Gamma_\ep.$ At the end of the game, Player II pays Player I the amount given by the
payoff function $F$, that is, Player I earns
$F(x_{\tau},t_{\tau})$ and the Player II earns $-F(x_{\tau},t_{\tau}),$  where $\tau$
is the number of rounds (a stopping time) that takes the game to end.  Later, we will show that $0<\tau<+\infty$ (see Remark \ref{tiempofinito}).
This procedure yields a sequence of game states $ (x_0,t_0),(x_1,t_1),\dots,(x_{\tau},t_\tau),$ where every $(x_k,t_k)$ except $(x_0,t_0)$
are random variables, depending on the coin tosses and the strategies adopted by the players.
A strategy $S_I$ for Player I is a collection of measurable mappings $S_I=\{S_I^k\}_{k=1}^\tau$ such that the next game position is
\[
S_I^{k+1} ( (x_0,t_0),(x_1,t_1),\dots,(x_k,t_k))=(x_{k+1},t_{k+1}) \in\Ae( x_k,t_k),
\]
if Player I  wins the coin toss given the partial history $((x_0,t_0),(x_1,t_1),\dots,(x_{\tau},t_\tau)).$
Similarly, Player II plays according to the strategy $S_{II}.$ The next game position  $(x_{k+1},t_{k+1})\in \Ae( x_k,t_k),$ given the
history $((x_0,t_0),(x_1,t_1),\dots,(x_k,t_k)),$ is selected according to a probability distribution
$p(\cdot|(x_0,t_0),(x_1,t_1),\dots,(x_k,t_k))$ which, in our case, is given by the fair coin toss.

The fixed starting point $(x_0,t_0),$ the domain $\Omega_T$ and the strategies $S_I$ and $S_{II}$ determine a unique probability measure
$\mathbb{P}_{S_I, S_{II}}^{x_0}$ on the space of plays $(\Omega_T\cup\Gamma_\ep)^{\infty}.$ We denote by $\Es$ the corresponding expectation.
If $S_I$ and $S_{II}$ denote the strategies adopted by the Player I and II respectively, given $(x_0,t_0)\in\Omega_T,$ the expected payoff is given by
$
\Es[F(x_\tau,t_\tau)].
$
The $\ep-$value for the Player I, when starting from $(x_0,t_0),$ is then defined as
\[
u_I^\ep(x_0,t_0)=\sup_{S_I}\inf_{S_{II}}\Es[F(x_\tau,t_\tau)],
\]
while the $\ep-$value of the game for the Player II is given by
\[
u_{II}^\ep(x_0,t_0)=\inf_{S_{II}}\sup_{S_{I}}\Es[F(x_\tau,t_\tau)].
\]

Now, we will describe the family of subsets
of $\Omega_T\cup\Gamma_\ep$ that encode the possible movements of the game.

We consider a family of sets $\{\B(x,t)\}_{(x,t)\in\Omega_T}$ with the following
properties: For every $(x,t)\in\Omega_T,$

\begin{enumerate}
      \item[{\bf A1.}] $\B(x,t)$ is a compact subset of
         $B(0,1)\times[-\nicefrac{c}{2},\nicefrac{c}{2}]$
              $(0<c<1)$ such that $(0,0)\in \B(x,t);$

    \item[{\bf A2.}]
	   For all $s\in\pi_2(\B(x,t)),$  the set $\B^s(x,t):=\left\{y\in\Rn\colon (y,s)\in\B(x,t)\right\}$  is symmetric with respect to the origin;

     \item[{\bf A3.}]Continuity respect to $(x,t):$ Given $(x,t)\in\Omega_T,$ if $\{(x_n,t_n)\}_{n\in\mathbb{N}}\subset\Omega_T$ and
    $(x_n,t_n)\to(x,t)$ as $n\to\infty$ then for every $(y,s)\in\B(x,t)$ there exist $(y_n,s_n)\in\B(x_n,t_n)$ such
    that $(y_n,s_n)\to (y,s)$ as $n\to\infty.$ Moreover if  $(y_n,s_n)\in\B(x_n,t_n)$  and $(y_n,s_n)\to (y,s)$ as $n\to\infty,$ then
    $(y,s)\in\B(x,t);$

    \item[{\bf A4.}] For every $v\in\Rn\setminus\{0\},$ there exist a unique $(z,r)\in\B(x,t)$ such that
    \[
    \min\left\{ \langle v,y\rangle \colon y\in\pi_1(\B(x,t)) \right\}=
    \langle v,z\rangle.
    \]
    From now on, $J_{(x,t)}(v)$  and $I_{(x,t)}(v)$ denote the point $z$ and
    the time $r$ respectively. Observe that
    \[
    \langle v, J_{(x,t)}(v)\rangle \neq 0,
    \]
    and
    \[
    (J_{(x,t)}(\lambda v), I_{(x,t)}(\lambda v))=(J_{(x,t)}(v),I_{(x,t)}(v))
    \]
    for any  $\lambda>0.$
    Therefore, $(J_{(x,t)}(v),I_{(x,t)}(v))$ depends only in the direction of $v.$
    Moreover,  $(-J_{(x,t)}(v),I_{(x,t)}(v))\in\B(x,t)$
     \[
    \max\left\{ \langle v,y\rangle \colon y\in\pi_1(\B(x,t)) \right\}=\langle v,
    -J_{(x,t)}(v)\rangle.
    \]
    In addition, we require that,
    \[
    J_{(x,t)}\colon\partial B(0,1)\to \partial\pi_1(\B(x,t))
    \]
    is surjective.
%
  \end{enumerate}

  \begin{exa}\label{ejemplo} We now give some examples of possible choices of sets $\B(x,t)$.
\begin{enumerate}
	\item For any $(x,t)\in\Omega_T,$ we define 
		\[
		\B_1(x,t):=\left\{(y,s)\in B(0,1)\times\left[-\frac{c}2, \frac{c}2\right]\colon |y|^2+|s|^2\le\rho^2\right\}
		\]
		where $0<\rho<\min\{1,\nicefrac{c}2\}.$ For this family of sets, 
		\[
		(J_{(x,t)}(v),I_{(x,t)}(v))=\left(-\frac{\rho v}{|v|},0\right)
		\]
	for all $(x,t)\in\Omega_T$ and $v\in\Rn\setminus\{0\}.$
	\item For any $(x,t)\in\Omega_T,$ we define  
		\[
		\B_2(x,t):=\left\{(y,s)\in B(0,1)\times\left[0,\frac{c}2\right]\colon |y|^2\le\frac{2\rho s}{c}\right\}
		\]
		where $0<\rho<1.$ Then
		\[
		(J_{(x,t)}(v),I_{(x,t)}(v))=\left(-\frac{\rho v}{|v|},\frac{c}2\right)
		\]
	for all $(x,t)\in\Omega_T$ and $v\in\Rn\setminus\{0\}.$

\end{enumerate}
	\end{exa}

Then, we define the set of possible movements for any $(x,t)\in\Omega_T.$
Given $(x,t)\in\Omega_T$ and $\ep>0$ small, the set of possible movements in
$(x,t)$ is given by a scaled version of the original family of sets. We let
\begin{equation}
  \Ae(x,t):=\left\{ (x,t)+\left(\ep y,\ep^2\frac{1-c}{c}s-\ep^2\frac{c+1}{2}\right)\colon (y,s)\in\B(x,t) \right\}.
  \label{ae}
\end{equation}

In the rest of this section, we only assume that the family $\{\B(x,t)\}_{(x,t)\in\Omega_T}$ has property {\bf A1}, the rest of properties
will be used in the following sections.

\begin{re}\label{tiempofinito}
Since, by assumption {\bf A1} the time $t_k$ decreases at least $c\ep^2$ at each round of the game, given $(x,t)\in\Omega_T$
we have that
$
0\le\tau(x,t)<\frac{t}{c\ep^2}+1.
$
Then,  the number of rounds that the player need to end the game when starting from $(x_0,t_0)$  is finite.
\end{re}

We have a Dynamic Programming Principle for our game. For the proof see \cite[Chapter 3]{msb}.

\begin{lem}[DPP]\label{DPP}
The $\ep-$value of the game for the Player I satisfies
\[
\begin{cases}
  u_I^\ep(x,t)={\dis \frac12\sup_{(y,s)\in\Ae(x,t)} u_{I}^\ep\left(y,s\right)+
  \frac12\inf_{(y,s)\in\Ae(x,t)} u_{I}^\ep\left(y,s\right)} &\mbox{if }(x,t)\in\Omega_T,\\
  u_I^\ep(x,t)= F(x,t) & \mbox{if } (x,t)\in\Gamma_\ep.
\end{cases}
\]
The $\ep-$value function for the Player II, $u_{II}^\ep(x,t),$ satisfies the same equations.
\end{lem}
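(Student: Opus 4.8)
The plan is to prove that the sup-inf value $u_I^\ep$ is a fixed point of the one-step operator
\[
T\varphi(x,t):=\frac12\sup_{(y,s)\in\Ae(x,t)}\varphi(y,s)+\frac12\inf_{(y,s)\in\Ae(x,t)}\varphi(y,s)
\]
on $\Omega_T$, the boundary identity being immediate from the definition of the game. Indeed, if $(x,t)\in\Gamma_\ep$ then $\tau=0$, the game stops at once, and Player I collects the deterministic amount $F(x,t)$, so $u_I^\ep(x,t)=F(x,t)$. For $(x,t)\in\Omega_T$ the structural fact I would exploit is the one recorded in Remark \ref{tiempofinito}: the time coordinate strictly decreases by at least $c\ep^2$ at every round, so $\tau$ is bounded \emph{deterministically} by $t/(c\ep^2)+1$. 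Thus the game has a finite horizon, every expectation $\mathbb{E}_{S_I,S_{II}}^{(x,t)}[F(x_\tau,t_\tau)]$ is an honest finite sum, and one may condition on the outcome of the first coin toss and restart the game from the new position (using its Markovian structure) without any optional-stopping or almost-sure termination subtleties.

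First I would establish $u_I^\ep\ge Tu_I^\ep$ on $\Omega_T$ by concatenating strategies from Player I's side. Fix $\delta>0$ and choose $(y_\delta,s_\delta)\in\Ae(x,t)$ with $u_I^\ep(y_\delta,s_\delta)\ge\sup_{(y,s)\in\Ae(x,t)}u_I^\ep(y,s)-\delta$. Let Player I play the strategy that, on winning the first toss, moves to $(y_\delta,s_\delta)$, and that from the position reached after the first round follows a strategy which is $\delta$-optimal for the value starting there. Since any $(z,r)\in\Ae(x,t)$ that Player II may choose on winning the first toss satisfies $u_I^\ep(z,r)\ge\inf_{(y,s)\in\Ae(x,t)}u_I^\ep(y,s)$, conditioning on the first toss and taking the infimum over Player II's responses gives
\[
\inf_{S_{II}}\mathbb{E}_{S_I,S_{II}}^{(x,t)}[F(x_\tau,t_\tau)]\ge\frac12\Big(\sup_{(y,s)\in\Ae(x,t)}u_I^\ep(y,s)-\delta\Big)+\frac12\Big(\inf_{(y,s)\in\Ae(x,t)}u_I^\ep(y,s)-\delta\Big).
\]
The right-hand side equals $Tu_I^\ep(x,t)-\delta$; taking the supremum over $S_I$ and letting $\delta\to0$ yields $u_I^\ep\ge Tu_I^\ep$.

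The reverse inequality $u_I^\ep\le Tu_I^\ep$ is the mirror argument from Player II's side, and here I would use crucially that $u_I^\ep$ is a \emph{supremum} over Player I's strategies. Given an arbitrary Player I strategy $S_I$ and $\delta>0$, let Player II respond by moving, on winning the first toss, to a point $(z,r)\in\Ae(x,t)$ with $u_I^\ep(z,r)\le\inf_{(y,s)\in\Ae(x,t)}u_I^\ep(y,s)+\delta$, and thereafter by capping the continuation: since for the \emph{fixed} tail of $S_I$ one has $\inf_{S_{II}}\mathbb{E}\le u_I^\ep$ at the position reached after the first round, Player II can keep the expected remaining payoff within $\delta$ of $u_I^\ep$ at that position. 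Conditioning on the first toss then gives $\inf_{S_{II}}\mathbb{E}_{S_I,S_{II}}^{(x,t)}[F(x_\tau,t_\tau)]\le Tu_I^\ep(x,t)+O(\delta)$, with a bound independent of $S_I$; taking the supremum over $S_I$ and sending $\delta\to0$ gives $u_I^\ep\le Tu_I^\ep$. Together with the previous step this proves $u_I^\ep=Tu_I^\ep$ on $\Omega_T$, which is the stated DPP, and the identity for $u_{II}^\ep$ follows by interchanging the roles of the two players and of $\sup$ and $\inf$.

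I expect the main obstacle to be the technical assembling of the ``$\delta$-optimal continuation'' into a single admissible (measurable) strategy across the possible histories: this is done by letting the tolerance used at the $k$-th round decay like $\delta2^{-k}$, so that the accumulated errors sum to $O(\delta)$, and by a measurable-selection argument patching the pieces together. The deterministic bound on $\tau$ from Remark \ref{tiempofinito} is exactly what makes this concatenation rigorous, since it forces the error series to terminate and keeps every expectation a genuine finite sum; this bookkeeping is the content carried out in \cite[Chapter 3]{msb}.
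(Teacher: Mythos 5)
Your proposal is correct and is precisely the standard first-step (strategy-concatenation) argument: the paper itself offers no proof of Lemma \ref{DPP}, deferring entirely to \cite[Chapter 3]{msb}, which formalizes exactly this conditioning-on-the-first-toss scheme with $\delta$-optimal continuations. The two genuinely technical points --- measurability of $u_I^\ep$ itself (so that the suprema, infima and expectations are well defined) and the measurable selection of near-optimal moves and continuation strategies across uncountably many histories --- are the ones you explicitly flag and defer to that same reference, so your sketch is as complete as the paper's own treatment.
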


Our next goal is to state a comparison principle for the $\ep-$values functions and then we will show that the game has a value.

\begin{de}
  A function $v$ is a subsolution of DPP if
 \[
  v(x,t)\ge{\dis \frac12\sup_{(y,s)\in\Ae(x,t)} v\left(y,s\right)+
\frac12  \inf_{(y,s)\in\Ae(x,t)} v\left(y,s\right)}
\]
for $(x,t)\in\Omega_T.$

Respectively, the supersolutions are defined by reversing the inequality for $v,$ that is
 \[
  v(x,t)\le{\dis \frac12\sup_{(y,s)\in\Ae(x,t)} v\left(y,s\right)+
  \frac12 \inf_{(y,s)\in\Ae(x,t)} v\left(y,s\right)}
\]
for $(x,t)\in\Omega_T.$
\end{de}

\begin{te}[Comparison Principle] \label{comparison}
 Let $\Omega\subset\Rn$ be a bounded open set and  $v$ be a subsolution (supersolution) of DPP and $v\le F$ ($v\ge F$) in $\Gamma_\ep$
 we have that $u_{II}^\ep\ge v$ ($u_{I}^\ep\le v$) in $\Omega_T.$
\end{te}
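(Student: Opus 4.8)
The two assertions are dual, so I will prove the subsolution statement in full --- if $v$ is a (bounded Borel) subsolution of DPP with $v\le F$ on $\Gamma_\ep$, then $u_{II}^\ep\ge v$ in $\Omega_T$ --- and indicate at the end the symmetric changes giving the supersolution case. Fix $(x_0,t_0)\in\Omega_T$ and $\eta>0$. The plan is to exhibit one strategy for the maximizing player that, against \emph{every} strategy of the minimizing player, secures an expected final payoff of at least $v(x_0,t_0)-\eta$; since $u_{II}^\ep=\inf_{S_{II}}\sup_{S_I}\Es[F(x_\tau,t_\tau)]$, this gives $u_{II}^\ep(x_0,t_0)\ge v(x_0,t_0)-\eta$ and then, letting $\eta\to0$, the claim. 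Concretely, I let Player I play the almost-maximizing strategy $S_I^0$: whenever the token sits at $(x_k,t_k)\in\Omega_T$ and Player I wins the toss, he moves to a point $(x_{k+1},t_{k+1})\in\Ae(x_k,t_k)$ with $v(x_{k+1},t_{k+1})\ge \sup_{(y,s)\in\Ae(x_k,t_k)}v(y,s)-\eta 2^{-k}$, a choice that exists by definition of the supremum.

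Now fix an arbitrary strategy $S_{II}$ and let $\mathcal{F}_k$ denote the history up to the $k$-th move under $\mathbb{P}_{S_I^0,S_{II}}^{(x_0,t_0)}$. Splitting according to the fair coin --- Player I wins with probability $\tfrac12$ and lands at the almost-maximizer, while Player II wins with probability $\tfrac12$ and lands somewhere in $\Ae(x_k,t_k)$, hence where $v$ is at least $\inf_{\Ae(x_k,t_k)}v$ --- gives, for $(x_k,t_k)\in\Omega_T$,
\[
\mathbb{E}\big[v(x_{k+1},t_{k+1})\mid\mathcal{F}_k\big]\ge \tfrac12\sup_{\Ae(x_k,t_k)}v+\tfrac12\inf_{\Ae(x_k,t_k)}v-\frac{\eta}{2^{k+1}}.
\]
Since $v$ is a subsolution of DPP, $v(x_k,t_k)\le \tfrac12\sup_{\Ae(x_k,t_k)}v+\tfrac12\inf_{\Ae(x_k,t_k)}v$, so the estimate reduces to $\mathbb{E}[v(x_{k+1},t_{k+1})\mid\mathcal{F}_k]\ge v(x_k,t_k)-\eta 2^{-(k+1)}$. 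Consequently $M_k:=v(x_k,t_k)-\eta 2^{-k}$ is a bounded submartingale (boundedness from $v$ bounded), with $M_0=v(x_0,t_0)-\eta$.

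Because the number of moves $\tau$ is bounded (Remark~\ref{tiempofinito}), optional stopping for $M_k$ yields $\mathbb{E}[M_\tau]\ge M_0=v(x_0,t_0)-\eta$. The game terminates at $(x_\tau,t_\tau)\in\Gamma_\ep$, where $v\le F$, so $M_\tau\le v(x_\tau,t_\tau)\le F(x_\tau,t_\tau)$, and therefore $\mathbb{E}_{S_I^0,S_{II}}^{(x_0,t_0)}[F(x_\tau,t_\tau)]\ge \mathbb{E}[M_\tau]\ge v(x_0,t_0)-\eta$. As $S_{II}$ was arbitrary while $S_I^0$ is fixed, $\sup_{S_I}\Es[F(x_\tau,t_\tau)]\ge v(x_0,t_0)-\eta$ for every $S_{II}$; taking the infimum over $S_{II}$ gives $u_{II}^\ep(x_0,t_0)\ge v(x_0,t_0)-\eta$, and $\eta\to0$ finishes the subsolution case.

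Two technical points must be handled. First, to make $S_I^0$ a legitimate strategy --- so that $\mathbb{P}_{S_I^0,S_{II}}$ and the conditional expectations are well defined --- the pointwise almost-maximizers must be selected measurably in the history, which is the standard measurable-selection step. Second, optional stopping needs integrability, secured by restricting to bounded Borel $v$ together with the deterministic bound $\tau<t_0/(c\ep^2)+1$ of Remark~\ref{tiempofinito}. I expect the main obstacle to be the careful justification of the conditional-expectation estimate: one must verify that, under the product structure of $\mathbb{P}_{S_I^0,S_{II}}$ generated by the fair coin and the two transition kernels, the winner-of-the-toss decomposition produces exactly the $\tfrac12$--$\tfrac12$ split used above, uniformly over histories; the remaining manipulations are only the bookkeeping of the summable tolerances $\eta 2^{-k}$. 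Finally, the supersolution statement follows by the dual construction: fix Player II's almost-minimizing strategy $S_{II}^0$, check that $v(x_k,t_k)+\eta 2^{-k}$ is a bounded supermartingale (now using $\tfrac12\sup_{\Ae(x_k,t_k)}v+\tfrac12\inf_{\Ae(x_k,t_k)}v\ge v(x_k,t_k)$), and deduce from $v\ge F$ on $\Gamma_\ep$ that $\mathbb{E}_{S_I,S_{II}^0}^{(x_0,t_0)}[F(x_\tau,t_\tau)]\le v(x_0,t_0)+\eta$ for every $S_I$, whence $u_I^\ep\le v$.
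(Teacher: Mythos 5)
Your proof is correct and is, in substance, the proof the paper intends: the paper gives no argument of its own for this theorem, delegating to Theorems 4.4 and 4.5 of \cite{mpr-parabolic}, and your scheme --- an almost-optimizing strategy with summable tolerances $\eta 2^{-k}$, the submartingale $v(x_k,t_k)-\eta 2^{-k}$, optional stopping justified by the deterministic bound on $\tau$ from Remark \ref{tiempofinito}, and the final $\sup$--$\inf$ bookkeeping --- is exactly that argument, including the measurable-selection caveat, which is handled the same way in the cited proofs.

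One discrepancy deserves explicit mention. Your key step reads ``since $v$ is a subsolution of DPP, $v(x_k,t_k)\le \frac12\sup_{\Ae(x_k,t_k)}v+\frac12\inf_{\Ae(x_k,t_k)}v$'', which is the \emph{reverse} of the paper's printed definition, where a subsolution satisfies $\ge$. Your reading is the right one: with the printed convention the theorem is false. Indeed, take $\Omega=(0,1)$, $F\equiv 0$ (so $u_{II}^\ep\equiv 0$) and $v(x,t)=\min\{t,\,x,\,1-x\}$; linear functions in $x$ satisfy the DPP with equality by Remark \ref{slineal}, the function $(x,t)\mapsto t$ satisfies $v\ge\frac12\sup_{\Ae}v+\frac12\inf_{\Ae}v$ strictly because time decreases by at least $c\ep^2$ at each move, and a minimum of functions with the $\ge$ property again has it; moreover $v\le 0=F$ on $\Gamma_\ep$, yet $v(1/2,T)>0=u_{II}^\ep(1/2,T)$. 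So the inequalities in the paper's definition are evidently transposed, and your proof establishes the theorem under the intended convention. A small slip of the same kind occurs in your one-line sketch of the dual case: for $v(x_k,t_k)+\eta 2^{-k}$ to be a supermartingale under Player II's almost-minimizing strategy you need $\frac12\sup_{\Ae(x_k,t_k)}v+\frac12\inf_{\Ae(x_k,t_k)}v\le v(x_k,t_k)$, not $\ge$ as you wrote; with that sign corrected the dual argument goes through verbatim.
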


Hence, we have that
$u_I^\ep$ ($u_{II}^\ep$) is the lowest (largest) function that satisfies the DPP with boundary values $F.$

\begin{te}\label{valor}
   Let $\Omega\subset\Rn$ be a bounded open set and $F$ a given payoff function in $\Gamma_\eta.$ Then the game has a $\ep-$value, i.e., $
   u_I^\ep=u_{II}^\ep$.
\end{te}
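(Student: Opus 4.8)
The plan is to deduce the equality $u_I^\ep = u_{II}^\ep$ as a direct consequence of the Comparison Principle (Theorem \ref{comparison}) together with the Dynamic Programming Principle (Lemma \ref{DPP}). The starting observation is that $u_I^\ep \le u_{II}^\ep$ always holds by a soft, general game-theoretic argument: for any fixed pair of strategies $(S_I, S_{II})$ we have the elementary inequality $\inf_{S_{II}} \Es[F] \le \sup_{S_I} \inf_{S_{II}} \Es[F]$ compared against $\inf_{S_{II}} \sup_{S_I} \Es[F]$, and more carefully, $\sup_{S_I}\inf_{S_{II}} \le \inf_{S_{II}}\sup_{S_I}$ is the standard max-min $\le$ min-max inequality valid for any real-valued payoff. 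Thus it remains only to establish the reverse inequality $u_{II}^\ep \le u_I^\ep$.

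For the reverse inequality I would invoke the Comparison Principle in the following way. By the DPP (Lemma \ref{DPP}), the function $v := u_I^\ep$ satisfies the DPP equation with equality on $\Omega_T$ and satisfies $v = F$ on $\Gamma_\ep$; in particular $v$ is simultaneously a subsolution and a supersolution of the DPP, and $v \le F$ (indeed $v = F$) on $\Gamma_\ep$. Applying the subsolution half of Theorem \ref{comparison} to $v = u_I^\ep$ yields $u_{II}^\ep \ge u_I^\ep$ on $\Omega_T$. Combining this with the trivial inequality $u_I^\ep \le u_{II}^\ep$ from the previous paragraph gives $u_I^\ep = u_{II}^\ep$, which is exactly the assertion that the game has a value.

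I expect the main subtlety — rather than a genuine obstacle — to be verifying the trivial inequality $u_I^\ep \le u_{II}^\ep$ cleanly, since one must be a little careful that the $\sup\inf \le \inf\sup$ ordering is applied to the same family of strategy pairs and the same expectation functional $\Es$. This is where the finiteness of the stopping time $\tau$ (Remark \ref{tiempofinito}) matters: because $0 \le \tau < t/(c\ep^2) + 1 < \infty$ almost surely, the expected payoff $\Es[F(x_\tau,t_\tau)]$ is well defined and finite (as $F$ is bounded Borel), so all the infima and suprema are taken over finite real quantities and the max-min inequality applies without measurability or integrability issues. Once this is in place, the bulk of the work has already been done in Theorem \ref{comparison}, and the proof reduces to the two-line sandwiching argument described above.
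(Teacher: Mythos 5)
Your proof has a genuine gap at its central step, and it is not a repairable slip: the inequality you extract from Theorem \ref{comparison} is the wrong one. Applying the ``subsolution half'' of Theorem \ref{comparison} to $v=u_I^\ep$ (which satisfies the DPP with equality and equals $F$ on $\Gamma_\ep$) yields $u_{II}^\ep\ge u_I^\ep$ --- but this is literally the same statement as the max-min $\le$ min-max inequality $u_I^\ep\le u_{II}^\ep$ from your first paragraph, not the reverse inequality $u_{II}^\ep\le u_I^\ep$ that you correctly announced as the remaining task. Combining $u_I^\ep\le u_{II}^\ep$ with $u_{II}^\ep\ge u_I^\ep$ gives nothing, so your final sandwich never produces equality. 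Nor can this be fixed by invoking the other half of the theorem: as stated, Theorem \ref{comparison} only bounds $u_{II}^\ep$ from \emph{below} (by subsolutions lying below $F$) and $u_I^\ep$ from \emph{above} (by supersolutions lying above $F$), so applying either half to either value function can only ever reproduce $u_I^\ep\le u_{II}^\ep$. No purely formal combination of Theorem \ref{comparison} and Lemma \ref{DPP} yields the missing inequality.

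What is missing is an upper bound for $u_{II}^\ep$ by a function satisfying the DPP, and this requires a genuinely probabilistic argument; this is exactly how the proof of Theorem 4.5 in \cite{mpr-parabolic}, which the paper invokes as its proof, proceeds. Fix $\eta>0$ and let Player II use the strategy $S_{II}^0$ that, upon winning the $k$-th coin toss, moves to a point of $\Ae(x_k,t_k)$ where $u_I^\ep$ is within $\eta 2^{-(k+1)}$ of its infimum over $\Ae(x_k,t_k)$. Because $u_I^\ep$ satisfies the DPP (Lemma \ref{DPP}), the process $M_k=u_I^\ep(x_k,t_k)+\eta 2^{-k}$ is a supermartingale under $\mathbb{P}_{S_I,S_{II}^0}$ for \emph{every} strategy $S_I$ of Player I. Since the game ends after finitely many rounds (Remark \ref{tiempofinito}) and $F$ is bounded, the optional stopping theorem applies, and since $u_I^\ep=F$ on $\Gamma_\ep$ we get
$\mathbb{E}_{S_I,S_{II}^0}^{(x_0,t_0)}[F(x_\tau,t_\tau)]=\mathbb{E}_{S_I,S_{II}^0}^{(x_0,t_0)}[u_I^\ep(x_\tau,t_\tau)]\le \mathbb{E}_{S_I,S_{II}^0}^{(x_0,t_0)}[M_\tau]\le M_0=u_I^\ep(x_0,t_0)+\eta$.
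Taking the supremum over $S_I$, then the infimum over all $S_{II}$, gives $u_{II}^\ep\le u_I^\ep+\eta$, and letting $\eta\to0$ finishes the proof. Note also that Remark \ref{tiempofinito} enters precisely here, to justify optional stopping, and not merely (as you suggest) to make the expectations well defined.
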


The proofs of above theorems are analogous to the proofs of Theorem 4.4 and Theorem 4.5  in \cite{mpr-parabolic}, respectively.

Observe that, using Theorems \ref{comparison} and \ref{valor}, we have that there exists a unique function that verify the DPP with a fixed
boundary datum.

\begin{te}\label{eu}
 Let $\Omega\subset\Rn$ be a bounded smooth open set and $F$ a given payoff function in $\Gamma_\eta.$ There exists a unique function
  $u_\ep$ in $\Omega_T$ that verify the DPP with boundary values $F.$ Moreover, the function $u_\ep$ coincides with the $\ep-$value of the game.
\end{te}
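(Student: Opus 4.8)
The plan is to deduce this statement directly from the three results already in hand: the Dynamic Programming Principle (Lemma~\ref{DPP}), the Comparison Principle (Theorem~\ref{comparison}), and the existence of a value (Theorem~\ref{valor}). No new estimate is required; the entire content has already been absorbed into those statements, so the argument is a short matter of assembling them.

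For existence, I would simply set $u_\ep := u_I^\ep = u_{II}^\ep$, where the equality is exactly the conclusion of Theorem~\ref{valor}. Lemma~\ref{DPP} then asserts that $u_I^\ep$ (and equally $u_{II}^\ep$) satisfies the DPP with boundary values $F$, so $u_\ep$ is a function on $\Omega_T$ with the required property, and by its very definition it coincides with the $\ep$-value of the game. This disposes of both the existence claim and the final sentence of the theorem.

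For uniqueness, I would take an arbitrary function $v$ on $\Omega_T$ solving the DPP with the same boundary data $F$ on $\Gamma_\ep$. Since $v$ satisfies the DPP with equality, it is simultaneously a subsolution and a supersolution in the sense of the definition preceding Theorem~\ref{comparison}; moreover $v = F$ on $\Gamma_\ep$ gives both $v \le F$ and $v \ge F$ there. Applying the Comparison Principle in its subsolution form yields $u_{II}^\ep \ge v$ in $\Omega_T$, and in its supersolution form yields $u_I^\ep \le v$ in $\Omega_T$. Combining these with $u_I^\ep = u_{II}^\ep$ from Theorem~\ref{valor} gives the sandwich
\[
u_\ep = u_I^\ep \le v \le u_{II}^\ep = u_\ep,
\]
whence $v \equiv u_\ep$.

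The step that carries all the weight is invoking the Comparison Principle, but since that has already been established I do not anticipate any genuine obstacle here; the only point demanding a moment's care is the routine observation that a function solving the DPP with equality qualifies as both a sub- and a supersolution, and that the boundary identity $v=F$ supplies both one-sided boundary inequalities needed to apply Theorem~\ref{comparison} in each of its two forms.
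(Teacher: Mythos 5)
Your proposal is correct and follows essentially the same route as the paper, which obtains Theorem~\ref{eu} precisely by combining Lemma~\ref{DPP} (the value functions satisfy the DPP, giving existence), Theorem~\ref{valor} ($u_I^\ep=u_{II}^\ep$), and Theorem~\ref{comparison} (a DPP solution is both a sub- and supersolution, hence sandwiched between $u_I^\ep$ and $u_{II}^\ep$, giving uniqueness). Your careful handling of the paper's sign conventions for sub/supersolutions and of the two one-sided boundary inequalities matches the intended argument exactly.
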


Theorems \ref{comparison} and \ref{eu} imply the comparison principle for functions that verify the DPP.

\begin{te}\label{comparison2}
 Let $\Omega\subset\Rn$ be a bounded smooth open set and  $v,u$ be  functions verifying the DPP with boundary values $H$ and $F$ in
 $\Gamma_\ep$ respectively. Then, if $H\le F,$ we have that $v\le u$ in $\Omega_T.$
\end{te}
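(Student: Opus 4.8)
The plan is to reduce the statement to the comparison principle already established in Theorem~\ref{comparison}, using the observation that a function verifying the DPP is simultaneously a subsolution and a supersolution. Concretely, since $v$ verifies the DPP in $\Omega_T$, the equality in the DPP shows in particular that
\[
v(x,t)\ge \frac12\sup_{(y,s)\in\Ae(x,t)} v(y,s)+\frac12\inf_{(y,s)\in\Ae(x,t)} v(y,s)
\]
for every $(x,t)\in\Omega_T$, so $v$ is a subsolution of DPP.

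First I would record the boundary behaviour. Because $v$ verifies the DPP with boundary values $H$, we have $v=H$ on $\Gamma_\ep$, and the hypothesis $H\le F$ then gives $v\le F$ on $\Gamma_\ep$. Thus $v$ is a subsolution of DPP whose boundary values are dominated by $F$, which is exactly the situation covered by the subsolution part of Theorem~\ref{comparison} applied with final payoff function $F$. That theorem yields
\[
u_{II}^\ep\ge v \qquad \text{in } \Omega_T,
\]
where $u_{II}^\ep$ denotes the $\ep$-value for Player II of the game with payoff $F$.

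It remains to identify $u_{II}^\ep$ with $u$. By Theorem~\ref{valor} the game with payoff $F$ has a value, so $u_I^\ep=u_{II}^\ep$, and by Theorem~\ref{eu} this common value is the unique function verifying the DPP with boundary values $F$. Since $u$ is by hypothesis a function verifying the DPP with boundary values $F$, uniqueness forces $u=u_{II}^\ep$. Combining this identification with the previous inequality gives $v\le u$ in $\Omega_T$, as claimed. (Symmetrically, one could instead view $u$ as a supersolution with $u=F\ge H$ on $\Gamma_\ep$, apply the supersolution part of Theorem~\ref{comparison} with payoff $H$ to get $u_I^\ep\le u$, and identify $u_I^\ep=v$; either route works.)

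The main point to watch is essentially bookkeeping: one must apply Theorem~\ref{comparison} with the correct payoff function (namely $F$, the larger of the two boundary data) and then invoke Theorems~\ref{valor} and~\ref{eu} to replace the abstract value $u_{II}^\ep$ by the concrete DPP solution $u$. No genuinely new estimate is needed, since all the analytic content has already been absorbed into the comparison principle for sub/supersolutions and into the existence-and-uniqueness statement for DPP solutions.
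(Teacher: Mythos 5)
Your proposal is correct and follows essentially the same route as the paper, which disposes of this statement by simply remarking that Theorems~\ref{comparison} and~\ref{eu} imply it: a function verifying the DPP is both a sub- and supersolution, so Theorem~\ref{comparison} applies with payoff $F$, and Theorems~\ref{valor} and~\ref{eu} identify the game value $u_{II}^\ep$ with the unique DPP solution $u$. Your write-up just makes this bookkeeping explicit.
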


As a consequence, we get that solutions to the DPP are uniformly bounded for bounded $\ep.$

\begin{co}\label{cotunif1}
 Let $\Omega\subset\Rn$ be a bounded smooth open set and  $u$ be a function verifying the DPP with boundary values $F$ in
 $\Gamma_\ep.$ Then,
 \[
 \inf_{\Gamma_\ep}F\le u(x,t)\le\sup_{\Gamma_\ep}F
 \]
 for any $(x,t)\in\Omega_T.$
\end{co}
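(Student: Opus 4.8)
The plan is to deduce this directly from the comparison principle for solutions of the DPP, Theorem \ref{comparison2}, by comparing $u$ against suitable constant functions. The key preliminary observation is that \emph{any constant function verifies the DPP}. Indeed, if $w \equiv k$ is constant on $\Omega_T \cup \Gamma_\ep$, then for every $(x,t)\in\Omega_T$ one has
\[
\frac12\sup_{(y,s)\in\Ae(x,t)} w(y,s)+\frac12\inf_{(y,s)\in\Ae(x,t)} w(y,s)=\frac12 k+\frac12 k = k = w(x,t),
\]
so $w$ verifies the DPP with boundary datum $k$ on $\Gamma_\ep$. (This uses only that $\Ae(x,t)$ is nonempty, which follows from {\bf A1} since $(x,t)\in\Ae(x,t)$.)

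Next I would set $M:=\sup_{\Gamma_\ep}F$ and $m:=\inf_{\Gamma_\ep}F$, which are finite because $F$ is bounded. For the upper bound, I take the constant function $w\equiv M$, which verifies the DPP with boundary values $M$, and I note that $F\le M$ on $\Gamma_\ep$ by definition of the supremum. Applying Theorem \ref{comparison2} to the pair $u$ (boundary datum $F$) and $w$ (boundary datum $M$), the hypothesis $F\le M$ yields $u\le w = M$ throughout $\Omega_T$. For the lower bound I argue symmetrically: the constant $w\equiv m$ verifies the DPP with boundary values $m$, and $m\le F$ on $\Gamma_\ep$, so Theorem \ref{comparison2} gives $m = w\le u$ in $\Omega_T$. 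Combining the two inequalities produces exactly
\[
\inf_{\Gamma_\ep}F\le u(x,t)\le\sup_{\Gamma_\ep}F\qquad\text{for all }(x,t)\in\Omega_T,
\]
which is the claim.

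I do not expect any genuine obstacle here, since the statement is a formal consequence of the already-established comparison principle; the only point requiring care is verifying that constants solve the DPP, which is immediate. As an alternative route, one could instead invoke Theorem \ref{eu} to identify $u$ with the $\ep$-value of the game and write $u(x,t)=\Es[F(x_\tau,t_\tau)]$; since $m\le F(x_\tau,t_\tau)\le M$ pointwise along every play (the game terminates in finite time by Remark \ref{tiempofinito}, so $(x_\tau,t_\tau)\in\Gamma_\ep$), taking expectations gives the same bounds. I would present the comparison-based argument as the main proof, as it is shorter and self-contained within this section.
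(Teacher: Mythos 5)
Your proposal is correct and is essentially the paper's own argument: the paper gives no separate proof, presenting the corollary as a direct consequence of the comparison principle of Theorem \ref{comparison2}, which is exactly your comparison of $u$ with the constant functions $\sup_{\Gamma_\ep}F$ and $\inf_{\Gamma_\ep}F$ (constants trivially verify the DPP). One cosmetic slip: $(x,t)\notin\Ae(x,t)$, since by \eqref{ae} the element of $\Ae(x,t)$ coming from $(0,0)\in\B(x,t)$ is $\left(x,\,t-\ep^2\frac{c+1}{2}\right)$; nonemptiness of $\Ae(x,t)$ still follows from {\bf A1}, so your argument is unaffected.
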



\section{Uniform convergence}\label{uniconv}

In this section, we prove that, extracting a subsequence if necessary, we have uniform convergence of $u_\ep$ as $\ep\to0.$ To this end, we adapt
some ideas from \cite{gr} and we use the following modification of Arzela--Ascoli lemma, see \cite{mpr-definition} for the proof.

\begin{lem}\label{AA}
  Let $\{f_\ep:\overline{\Omega_T}\to\mathbb{R}, \ep>0\}$ be a set of functions such that:
  \begin{enumerate}
    \item There exist a positive constant $C$ so that $|f_\ep(x,t)|<C$ for every $\ep>0$ and every $(x,t)\in\overline{\Omega_T};$
    \item Given $\nu>0,$ there exist positive constants $r_0$ and $\ep_0$ such that for any $\ep<\ep_0$ and any
      $(x,t),(y,s)\in\overline{\Omega_T}$ with $|x-y|+|t-s|<r_0,$ it holds that
$      |f_{\ep}(x,t)-f_\ep(y,s)|<\nu$.
  \end{enumerate}
  Then there exists a uniformly continuous function $f:\overline{\Omega_T}\to\R$ and a subsequence still denoted by $\{f_\ep\}_{\ep>0}$ such that
  \[
  f_\ep\to f \quad \mbox{uniformly in } \overline{\Omega_T}
  \]
  as $\ep\to0.$
\end{lem}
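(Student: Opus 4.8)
The plan is to follow the classical Arzel\`a--Ascoli strategy of diagonal extraction on a countable dense set, followed by an upgrade to uniform convergence using the (asymptotic) equicontinuity hypothesis (2); the one genuinely new feature is that (2) holds only for $\ep<\ep_0$, so I must be careful to extract a sequence tending to $0$ along which the equicontinuity estimate eventually applies. First I would note that, since $\Omega$ is bounded, $\overline{\Omega_T}=\overline{\Omega}\times[0,T]$ is a compact metric space, hence separable; I fix a countable dense subset $D=\{z_j\}_{j\in\N}\subset\overline{\Omega_T}$ and a sequence $\ep_n\to0$. By hypothesis (1) the reals $\{f_{\ep_n}(z_1)\}_n$ are bounded, so Bolzano--Weierstrass yields a convergent subsequence; iterating over $z_2,z_3,\dots$ and passing to the diagonal produces a single subsequence, still written $\{f_{\ep_n}\}$, such that $f_{\ep_n}(z_j)$ converges for every $j$.

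Next I would show this diagonal subsequence is uniformly Cauchy on $\overline{\Omega_T}$. Given $\nu>0$, hypothesis (2) furnishes $r_0,\ep_0>0$; by compactness I cover $\overline{\Omega_T}$ by finitely many balls of radius $r_0/2$ in the metric $|x-y|+|t-s|$, centred at points $z_{j_1},\dots,z_{j_m}\in D$ (using density). Since each of these finitely many sequences $\{f_{\ep_n}(z_{j_i})\}_n$ converges, there is $n_0$ so that for $n,n'\ge n_0$ one has both $\ep_n,\ep_{n'}<\ep_0$ and $|f_{\ep_n}(z_{j_i})-f_{\ep_{n'}}(z_{j_i})|<\nu$ for all $i$. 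For an arbitrary $(x,t)\in\overline{\Omega_T}$, picking a centre $z_{j_i}$ within distance $r_0/2$, the triangle inequality
\[
|f_{\ep_n}(x,t)-f_{\ep_{n'}}(x,t)|\le |f_{\ep_n}(x,t)-f_{\ep_n}(z_{j_i})| + |f_{\ep_n}(z_{j_i})-f_{\ep_{n'}}(z_{j_i})| + |f_{\ep_{n'}}(z_{j_i})-f_{\ep_{n'}}(x,t)|
\]
bounds the left side by $3\nu$, using (2) for the outer terms (legitimate because $\ep_n,\ep_{n'}<\ep_0$ and the distance is below $r_0$) and the pointwise convergence at the centres for the middle term. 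Hence $\{f_{\ep_n}\}$ is uniformly Cauchy, so it converges uniformly on $\overline{\Omega_T}$ to a function $f$.

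Finally I would check that $f$ is uniformly continuous: for $(x,t),(y,s)$ with $|x-y|+|t-s|<r_0$, hypothesis (2) gives $|f_{\ep_n}(x,t)-f_{\ep_n}(y,s)|<\nu$ for all large $n$, and letting $n\to\infty$ yields $|f(x,t)-f(y,s)|\le\nu$. The point requiring care---rather than a deep obstacle---is precisely the asymptotic character of (2): unlike ordinary equicontinuity it constrains only the tail $\ep<\ep_0$, so the argument must be arranged so that, for each fixed $\nu$, only the finitely many early terms with $\ep_n\ge\ep_0$ are discarded before the equicontinuity estimate is invoked, which is exactly what the threshold $n,n'\ge n_0$ accomplishes.
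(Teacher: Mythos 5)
Your proof is correct: the diagonal extraction over a countable dense subset of the compact set $\overline{\Omega_T}$, the finite covering by balls of radius $r_0/2$ to upgrade pointwise convergence at the centres to uniform Cauchyness, and the careful handling of the asymptotic character of hypothesis (2) (discarding the finitely many indices with $\ep_n\ge\ep_0$) are all sound, and the argument correctly avoids assuming any continuity of the individual $f_\ep$. Note that the paper itself does not prove this lemma but defers to \cite{mpr-definition}, where the proof is essentially this same classical Arzel\`a--Ascoli adaptation, so your proposal matches the intended argument.
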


Now, let $\eta>0$ and $F:\Gamma_\eta\to\R$ be a bounded Borel function,
we consider the family of functions  $\{u_\ep\}_{\ep>0}$ where $u_\ep$ are  the $\ep-$values
of the game with payoff function $F$ for each $\ep>0.$ Observe that, by Corollary \ref{cotunif1}, we have that
\begin{equation}
  \label{cotunif2} |u_\ep (x,t)|\le \sup_{(w,s)\in\Gamma_\eta}|F(w,s)| \quad \forall (z,s)\in\overline{\Omega_T}.
\end{equation}
Therefore, the family $\{u_\ep\}_{\ep>0}$ satisfies the first condition in Lemma \ref{AA}. Then, to prove the uniform convergence of
the family, we only need to show that $\{u_\ep\}_{\ep>0}$ satisfies also the second condition of the lemma. To prove this, we need to use properties {\bf A1}-{\bf A2} for the family of sets $\{\B(x,t)\}_{(x,t)\in\Omega_T}$.

\begin{re}\label{slineal}Observe that,
  by {\bf A2}, any linear function $l(x)=\langle v,x\rangle + b$ is a solution of the DDP (with $F(x,t)=l(x)$ in $\Gamma_\ep$),
  where $v\in\R^N$ and $b\in\R.$
\end{re}

\begin{lem}\label{cec} Let $\Omega$ be bounded convex domain with $\partial \Omega\in C^2$ and positive curvature, $f:\Omega_\eta\to\R$ be a Lipschitz
  continuous function and assume that the family of sets $\{\B(x,t)\}_{(x,t)\in\Omega_T}$ satisfies the properties {\bf A1}--{\bf A2}. Then,
  if we take $F(x,t)=f(x)$ as our payoff function,
  given $\nu>0$ there exist positive constants $r_0$ and $\ep_0$ such that for any $\ep<\ep_0$ and any
      $(x,t),(y,s)\in\overline{\Omega_T}$ with $|x-y|+|t-s|<r_0,$ it holds that $
      |u_{\ep}(x,t)-u_\ep(y,s)|<\nu$,
      where $u_\ep$ is the $\ep-$value of the game with boundary value $F(x,t),$
      i.e. $\{u_\ep\}_{\ep>0}$ satisfies the second condition of the Lemma \ref{AA}.
\end{lem}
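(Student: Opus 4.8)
The plan is to verify the second hypothesis of Lemma \ref{AA}, i.e. an asymptotic modulus of continuity for $\{u_\ep\}$, entirely by comparison arguments. Since only {\bf A1}--{\bf A2} are available, I would first isolate the structural facts they provide. By Remark \ref{slineal} every affine $l(z)=\langle p,z\rangle+b$ is an \emph{exact} solution of the DPP, so affine functions of the token's position are martingales under the saddle--point strategies; in particular the spatial motion carries no net drift. More generally, for a time--independent function that is convex (resp. concave) in $x$, the slicewise symmetry {\bf A2} forces a one--sided mean value inequality: pairing the point realizing the infimum over $\Ae(x,t)$ with its reflection through the origin and using convexity gives $w(x)\le\frac12\sup_{\Ae(x,t)}w+\frac12\inf_{\Ae(x,t)}w$ (reversed for concave $w$). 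Finally, by {\bf A1} and \eqref{ae} each round displaces the token by at most $\ep$ in space and decreases time by an amount in $[c\ep^2,\ep^2]$. With these ingredients the comparison principle (Theorems \ref{comparison}, \ref{comparison2}) lets me bound $u_\ep$ from above (below) by any function lying above (below) $F=f$ on $\Gamma_\ep$ and satisfying the corresponding one--sided DPP inequality.

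The heart of the argument is a boundary estimate: for $(x,t)$ whose spatial part lies within $\delta$ of $\partial\Omega$, I would show $|u_\ep(x,t)-f(x)|\le\omega(\delta)+C\ep$, with $\omega$ depending only on $\mathrm{Lip}(f)$ and the geometry, uniformly in $\ep$. This is exactly where the convexity of $\Omega$ and the positive curvature of $\partial\Omega\in C^2$ enter: they furnish a uniformly convex defining function, equivalently a uniform exterior tangent ball at every boundary point, from which I would build explicit radial barriers. The strict convexity (quantified by the positive lower bound on the curvature) produces a mean value \emph{defect} of order $\ep^2$ with a definite sign, which dominates the $O(\ep^2)$ error terms coming from the anisotropic scaling \eqref{ae}; thus the barrier genuinely sits on one side of the DPP. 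Combining it with the Lipschitz bound $|f(z)-f(w)|\le L\,|z-w|$ to control it against $F$ on $\Gamma_\ep$, comparison pins $u_\ep$ to its boundary values in a neighbourhood of $\Gamma$.

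Granting the boundary estimate, I would conclude as follows. For two interior points I use affine comparison functions (exact solutions by Remark \ref{slineal}) with slopes $\pm L$ together with the no--drift property to bound the spatial oscillation of $u_\ep$ by $L\,|x-y|$ up to $o(1)$; the boundary estimate strips away the boundary layer so that this interior bound holds uniformly up to $\partial\Omega$. Continuity in time is easier, because $F$ is $t$--independent and time only decreases: starting from a smaller time means fewer rounds, and since the spatial walk has no drift with steps of size $\le\ep$ and time decrement of order $\ep^2$, the expected spatial displacement accumulated over a time interval of length $|t-s|$ is controlled by $\sqrt{|t-s|}$, so $|u_\ep(x,t)-u_\ep(x,s)|\le L\,O(\sqrt{|t-s|})+o(1)$ by the same comparison. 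Choosing $r_0$ and $\ep_0$ small then yields the claim. The main obstacle is the boundary estimate under the weak hypotheses {\bf A1}--{\bf A2}: since the movement sets $\Ae(x,t)$ are essentially arbitrary symmetric sets (continuity in $(x,t)$, namely {\bf A3}, is \emph{not} assumed here), the barrier must be robust to their unknown shape and interact correctly with the space--time scaling \eqref{ae}; it is precisely the positive curvature that supplies the sign--definite defect making such a robust barrier possible.
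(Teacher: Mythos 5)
Your preparatory observations are correct --- by {\bf A2} the pairing of an extremal point with its reflection shows that time-independent functions convex (resp.\ concave) in $x$ satisfy $w(x)\le\frac12\sup_{\Ae(x,t)}w+\frac12\inf_{\Ae(x,t)}w$ (resp.\ $\ge$), and affine functions are exact DPP solutions --- but the step your whole argument rests on, the boundary barrier, fails. Under {\bf A1}--{\bf A2} alone \emph{no} function can have a sign-definite mean-value defect of order $\ep^2$, and radial barriers are not even on the correct side of the DPP. Two concrete obstructions. First, $\B(x,t)\equiv\{(0,0)\}$ satisfies {\bf A1}--{\bf A2}; then $\Ae(x,t)$ is the single point $\left(x,\,t-\ep^2\frac{c+1}{2}\right)$, so for every time-independent $w$ one has $\frac12\sup_{\Ae}w+\frac12\inf_{\Ae}w=w(x)$ exactly: zero defect. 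The defect must come from the movement sets; the positive curvature of $\partial\Omega$ is a property of the domain and cannot manufacture it. Second, and worse, take $\B(x,t)\equiv\{(0,0),(\rho e_1,0),(-\rho e_1,0)\}$ with $0<\rho<1$ (admissible: compact, symmetric slices). For a radial barrier $w(z)=\psi(|z-z_0|)$ with $\psi$ increasing (as any exterior-ball barrier must be), at points $x$ with $x-z_0\perp e_1$ one computes
\[
\frac12\sup_{\Ae(x,t)}w+\frac12\inf_{\Ae(x,t)}w
=\frac12\,\psi\!\left(\sqrt{|x-z_0|^2+\ep^2\rho^2}\right)+\frac12\,\psi(|x-z_0|)>w(x),
\]
i.e.\ the barrier is a \emph{strict subsolution} there: the tangential Hessian of an increasing radial function is positive, and nothing in {\bf A1}--{\bf A2} prevents the sets from selecting exactly the tangential directions. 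So comparison cannot be invoked for your barrier, and the boundary estimate on which Cases 3 and 4 of your outline depend is unproven. (Adding a monotone term $\lambda t$ to force a defect does not help: the coefficient $\lambda$ needed to beat the tangential excess is of order one, and then $\lambda t$ ruins the estimate uniformly for $t\in(0,T]$.)

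The paper's proof sidesteps this entirely: near a lateral boundary point it uses only \emph{affine} barriers $v=ax^N+b$, which by Remark \ref{slineal} are exact DPP solutions whatever the shape of the sets, and it extracts the boundary estimate from an \emph{iterated} comparison on shrinking slabs, gaining a factor $\frac12$ of the oscillation $\beta-\alpha$ at each step --- no strict defect is ever used. The positive curvature of $\partial\Omega$ enters purely geometrically, not through any barrier: it guarantees that the portion of the strip $\Theta_\ep$ with $x^N>-8\delta$ is trapped in a ball of radius $O(\delta^{1/2})$, where the Lipschitz datum $f$ has small oscillation. The initial strip $\Omega\times(-\ep^2,0]$ is then handled by a martingale argument (a pulling strategy, the supermartingale $|x_k-x|^2-k\ep^2$, optional stopping and Jensen), giving $|u_\ep(x,t)-f(x)|\le C(t^{1/2}+\ep)$, which is close in spirit to your $\sqrt{|t-s|}$ heuristic; and interior pairs are treated by a translation-plus-comparison argument, not by affine comparison functions at interior points (which, as sketched, you cannot justify, since $u_\ep$ at an interior point is not a boundary datum). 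If you replace your radial barrier by the linear-barrier iteration and your interior step by the translation trick, your outline essentially becomes the paper's proof.
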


\begin{proof} We divide the proof in four cases.

  {\bf Case 1}. The case $(x,t),(y,s)\in \Gamma$ is a consequence of the fact that $f$ is assumed to be Lipschitz.

  {\bf Case 2}. Now, we study the case $(x,t)\in\Omega_T$ and $(y,t)\in\Gamma_\varepsilon$ with $y\in\Theta_\varepsilon$.
As in the proof of  \cite[Lemma 14]{gr}, using that $\partial\Omega\in C^2,$ we can choose an hyperplane $\Pi_0$ such that
$\Pi_0$ is tangent to $\Omega$ at some point $y_0\in \partial \Omega$ and $y$ lies in the
outward normal direction to $\partial\Omega$ at $y_0.$ Via a translation and rotation of the coordinate axes,
we can assume that $y_0=0$ and $\Pi_0 = \{x^N=0\}.$ Moreover, using that $\partial \Omega$ has positive curvature, there exist a positive
constants $k$ and $K$ such that for $U= B(0,k)\times\{x\in\Rn\colon -k<x^N< k\}$  we have that
\[
\Omega\cap U \subset \left\{ x\in \mathbb{R}^N\colon x^N\le-K\sum_{i=1}^{N-1}(x^i)^2 \right\}.
\]

On the other hand, by the definition of  $\Theta_\varepsilon,$ if $x\in\Theta_\varepsilon\cap U$ there exists
$z\in\partial\Omega\cap U$ such that $|x-z|\le \varepsilon.$ Then, for any $\delta>0$ if  $-8\delta<z^N< 8\delta$ and
$0<\varepsilon< \left(\frac{8\delta}{K}\right)^{1/2}$
we have that
\[
K\sum_{i=1}^{N-1}(x^i)^2\le 24\delta+K\varepsilon^2<32\delta.
\]
Then, for any $0<\delta<\frac{1}{2K}$ and $0<\varepsilon<\left(\frac{8\delta}{K}\right)^{1/2}$
\[
\mathcal{C}_{\delta,\varepsilon}\subset\left\{ w\in \R^{N-1}\colon K\sum_{i=1}^{N-1}(w^i)^2<32\delta\right\}\times(-8\delta,8\delta)\subset
B(0,\rho_{\delta})
\]
where $\mathcal{C}_{\delta,\varepsilon}:=\Theta_\varepsilon\cap\{x\in\Rn\colon -8\delta<x^{N}<8\delta\}$ and
$\rho_\delta=\left(\frac{64\delta}{K}\right)^{1/2}.$ Therefore,
\begin{equation*}
  f(x)\le \alpha:=\sup_{z\in\Theta_\varepsilon\cap B(0,\rho_\delta)}f(z)\quad \forall x\in \mathcal{C}_{\delta,\varepsilon}.
\end{equation*}

Now, we consider the function $v:\Rn\times\R\to\R$
\[
v(x,t)=ax^N+b
\]
where $a$, $b$ are given by
\[
a=-\frac{\beta-\alpha}{4\delta+\varepsilon},\qquad b=\frac{4\delta\alpha+\varepsilon\beta}{4\delta+\varepsilon}
\]
with
\[
\beta=\sup_{z\in\Omega_\eta}f(z).
\]
Observe that $v$ is decreasing with respect to the space variable $x^N,$  $v(x,t)\equiv\beta$ on $\{x\in\Rn\colon x^N=-4\delta\},$
$v(x,t)\equiv\alpha$ on $\{x\in\Rn\colon x^N=\ep\}$ and, by Remark~\ref{slineal}, $v$ is a solution of the DPP.

If, $a=0$ then we have $\alpha=\beta$ then, by Corollary \ref{cotunif1},
\begin{equation*}
  u_\varepsilon(x,t)\le\alpha \quad\forall(x,t)\in\Omega_T\cup\Gamma_\varepsilon.
\end{equation*}

Now, we consider the case $a\neq0.$  We observe that, if we take
\[
\Omega'=\Omega\cap\{x\in\Rn\colon -4\delta<x^N<0\},
\quad \Omega'_\varepsilon=\{x\in\Rn\colon\mbox{dis}(x,\Omega')\le\varepsilon\},
\]
\[
\Gamma'_\varepsilon=\left(\Omega'_\varepsilon\times(-\varepsilon^2,0]\right)
\cup\left((\Omega'_\varepsilon\setminus\Omega')\times(0,T]\right),
\]
we have that $v$ and $u_\varepsilon$ are solution of DPP in $\Omega'_T=\Omega\times(0,T]$ with payoff functions $v$ and $u_\ep$ in
$\Gamma'_\varepsilon$ respectively.
Since, by definition of $v,$ $u_\varepsilon(x,t)\le v(x,t)$ in $\Gamma'_\varepsilon,$ using Theorem \ref{comparison2}, we have that
\begin{equation*}
  u_\varepsilon(x,t)\le v(x,t) \mbox{ in } \Omega'_\varepsilon\times(-\varepsilon^2,T].
\end{equation*}

On the other hand, there exists $\varepsilon_1>0$ (depending of
$\delta,$ $\alpha$ and $\beta$) such that
\[
v(x,t)\le \alpha+\frac12(\beta-\alpha)
\]
in $(\Omega\cap\{x\in\Rn\colon-\delta-\varepsilon<x^N<
\varepsilon\})\times(-\varepsilon^2,T]$ for all $\varepsilon<
\varepsilon_1$. Then, by an iterative process, we have that for any
$m\in\mathbb{N}$ there exists $\varepsilon_m>0$ (depending of
$\delta,$ $\alpha$ and $\beta$) such that
\[
u_\varepsilon(x,t)\le \alpha+\left(\frac12\right)^m(\beta-\alpha)
\]
in $(\Omega\cap\{x\in\Rn\colon-\delta/4^m-\varepsilon<x^N<
\varepsilon\})\times(-\varepsilon^2,T],$ for all $\varepsilon<\varepsilon_m$.

The argument needed to obtain an analogous lower bound is similar.

On the other hand, since $f$ is  Lipschitz,  we have
\[
|\alpha-f(y)|\le C \delta^{1/2} \quad \forall y\in B(0,\delta).
\]
Therefore, given $\nu>0$ we can choose small $\delta,\varepsilon>0$ and large enough $m\in\mathbb{N}$ such that $x\in\Omega$ and
$y\in\Theta_\varepsilon$ with $|x-y|<\delta/4^m$ it holds
\begin{equation}\label{pdes}
|u_\varepsilon(x,t)-F(y,s)|=
|u_\varepsilon(x,t)-f(y)|<\nu\quad\forall t\in(0,T]\,\forall s\in(-\varepsilon^2,T].
\end{equation}

{\bf Case 3}. The case $(x,t)\in\Omega_T$ and $(y,s)\in\Omega\times(-\varepsilon^2,0].$
First, we assume that $x=y$ and Player I follows a strategy $S_I^x$ where he points to $y$ and Player II follows any
strategy. Then
\[
M_k=|x_k-x|^2-k\varepsilon^2
\]
is a supermartingale. Indeed,
\[
\Esy[|x_k-x|^2|x,x_1,\dots,x_{k-1}]\le|x_{k-1}-x|^2+\varepsilon^2.
\]
Then, by the optimal stopping theorem and Remark \ref{tiempofinito}, we have
\[
\Esy[|x_\tau-x|^2]\le C(t+\varepsilon^2)
\]
where $C$ is a constant independent of $x$ and $\varepsilon.$ Thus, by Jensen's inequality, we get
\[
\Esy[|x_\tau-x|]\le C(t+\varepsilon^2)^{\nicefrac12}\le C(t^{\nicefrac12}+\varepsilon).
\]
Hence,
\[
  F(x,s)-L C(t^{\nicefrac12}+\varepsilon)\le \Esy [F(x_\tau,t_{\tau})]
  \le  F(x,s)+L C(t^{\nicefrac12}+\varepsilon)
\]
where $L$ is the Lipschitz constant of $f.$
Then,
\begin{align*}
  u_\varepsilon(x,t)&=\sup_{S_I}\inf_{S_{II}} \mathbb{E}_{S_{I},S_{II}}^{(x,t)}[F(x_\tau,t_\tau)]\\
  &\ge\inf_{S_{II}}\Esy[F(x_\tau,t_\tau)]\\
  &\ge F(x,s)-LC(t^{1/2}+\varepsilon).
\end{align*}
Therefore
\[
  u_\varepsilon(x,t)-F(x,s)= u_\varepsilon(x,t)-f(x)\ge-C(t^{1/2}+\varepsilon).
\]
Similarly, choosing for Player II the strategy where he points to $x,$ we have that
\[
 u_\varepsilon(x,t)-f(x)\le C(t^{1/2}+\varepsilon).
\]
We can conclude that
\[
| u_\varepsilon(x,t)-f(x)|\le C(t^{1/2}+\varepsilon).
\]
Finally, if $x\neq y,$ we utilize the above inequality and we have that
\begin{align*}
 |u_\varepsilon(x,t)-u_\varepsilon(y,s)|&= |u_\varepsilon(x,t)-f(y)|\\
 &\le |u_\varepsilon(x,t)-f(x)| + |f(x)-f(y)|\\
  &\le C(|x-y|+t^{\nicefrac12}+\varepsilon).
\end{align*}
Therefore, by \eqref{pdes} and the above inequality, given $\nu>0,$ there exist $\varepsilon_0,r_0>0$ so that
\begin{equation}\label{sdes}
 |u_\varepsilon(x,t)-u_\varepsilon(y,s)|\le \nu
\end{equation}
for all $\varepsilon<\varepsilon_0$ and for any $(x,t)\in\Omega_T$ and $(y,s)\in\Gamma_\varepsilon$ such that
\mbox{$|x-y|+|t-s|<r_0.$}

{\bf Case 4}. Finally, we study the case $(x,t),(y,s)\in\Omega_T.$
We consider, as in the proof of \cite[Lemma 17]{mpr-parabolic},
\[
\hat{\Omega}_T=\left\{(z,t)\in\Omega_T\colon d((z,t),\Gamma)>\frac{r_0}{3}\right\}.
\]
where
\[
d( (z,t),\Gamma)=\inf\{|z-y|+|t-s|\colon (y,s)\in\Gamma\},
\]
and the boundary strip
\[
\hat{\Gamma}=\left\{(x,t)\in\overline{\Omega_T}\colon d((z,t),\Gamma)\le\frac{r_0}{3}\right\}.
\]
Let $(x,t),(y,s)\in\Omega_T$ such that $|x-y|+|t-s|<\frac{r_0}{3}.$
First, if $(x,t),(y,s)\in\hat{\Gamma},$ by comparison the values $(x,t)$ and $(y,s)$ to the nearby boundary values and using \eqref{sdes}, we
have
$|u_\varepsilon(x,t)-u_\varepsilon(y,s)|\le \nu$
for all $\varepsilon<\varepsilon_0.$

Finally, the case $(x,t),(y,s)\in\hat{\Omega}_T.$ Without loss of generality, we can assume that $t>s.$ Define
\[
\hat{F}(z,h)=u_\varepsilon(z-x+y,h-t+s)+3\nu,\quad \mbox{for } (z,h)\in\hat{\Gamma}.
\]
Then, by the reasoning above,
\[
\hat{F}(z,h)\ge u_\varepsilon(z,h)     \quad \forall (z,h)\in\hat{\Gamma}.
\]
Let $\hat{u}_\varepsilon$ be a solution of DPP in $\hat{\Omega}_T$ with the boundary values $\hat{F}$ in $\hat{\Gamma}.$ By comparison
principle and uniqueness, we have
\[
u_\varepsilon(x,t)\le\hat{u}_\varepsilon(x,t)=u_\varepsilon(y,s)+3\nu.
\]
The reverse bound follows by a similar argument.
\end{proof}

Now, by \eqref{cotunif2} and using Lemma \ref{AA} and Lemma \ref{cec}, we  get the main result of this section.

\begin{te}\label{conv}
  Under the same hypothesis in Lemma \ref{cec}. Let $\{u_\varepsilon\}_{\varepsilon>0}$ be the family of solution of DPP in $\Omega_T$ with a  fixed
  Lipschitz continuous datum $F(x,t)=f(x)$ in $\Gamma.$ Then, there exists a subsequence still denoted by $\{u_\varepsilon\}_{\varepsilon>0}$
  and a uniformly continuous function $u$ such that
  \[
  u_{\varepsilon}\to u \quad\mbox{uniformly in }\overline{\Omega}_T
  \]
  as $\varepsilon\to0^+.$
\end{te}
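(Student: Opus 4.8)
The plan is to deduce the statement directly from the modified Arzel\`a--Ascoli Lemma \ref{AA}, by checking its two hypotheses for the family $\{u_\varepsilon\}_{\varepsilon>0}$ of solutions of the DPP with datum $F(x,t)=f(x)$.

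First I would verify the uniform bound, condition (1) of Lemma \ref{AA}. Each $u_\varepsilon$ verifies the DPP with boundary values $F$, so Corollary \ref{cotunif1} gives $\inf_{\Gamma_\varepsilon}F\le u_\varepsilon(x,t)\le\sup_{\Gamma_\varepsilon}F$ for every $(x,t)\in\Omega_T$. This is exactly the estimate \eqref{cotunif2}, and since $f$ is Lipschitz on the bounded set $\Omega_\eta$ the constant $C=\sup_{(w,s)\in\Gamma_\eta}|F(w,s)|$ is finite and independent of $\varepsilon$, which furnishes the required uniform bound on $\overline{\Omega_T}$.

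Next I would verify the oscillation control, condition (2) of Lemma \ref{AA}. This is precisely the content of Lemma \ref{cec}: under the hypotheses assumed here on $\Omega$ and on the family $\{\B(x,t)\}_{(x,t)\in\Omega_T}$, given $\nu>0$ there exist positive constants $r_0$ and $\varepsilon_0$ so that $|u_\varepsilon(x,t)-u_\varepsilon(y,s)|<\nu$ whenever $\varepsilon<\varepsilon_0$ and $|x-y|+|t-s|<r_0$. With both hypotheses of Lemma \ref{AA} in hand, the lemma produces a uniformly continuous function $u:\overline{\Omega_T}\to\R$ and a subsequence, not relabelled, along which $u_\varepsilon\to u$ uniformly on $\overline{\Omega_T}$, which is the assertion.

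The genuine work has already been carried out in Lemma \ref{cec} (the equicontinuity estimate, split into the four boundary and interior cases), so the present theorem is the packaging of Corollary \ref{cotunif1} and Lemma \ref{cec} through the compactness Lemma \ref{AA}, and I do not expect any serious obstacle. The only point demanding a little care is bookkeeping: the hypotheses of the present statement (convexity and $C^2$ boundary with positive curvature for $\Omega$, properties {\bf A1}--{\bf A2} for $\{\B(x,t)\}$, and Lipschitz continuity of $f$) coincide verbatim with those of Lemma \ref{cec}, so its conclusion may be invoked without modification.
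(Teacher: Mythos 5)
Your proposal is correct and follows exactly the paper's own route: the paper also deduces Theorem \ref{conv} by combining the uniform bound \eqref{cotunif2} (condition (1) of Lemma \ref{AA}) with the equicontinuity estimate of Lemma \ref{cec} (condition (2)) and then invoking the modified Arzel\`a--Ascoli Lemma \ref{AA}. There is nothing missing, and your bookkeeping remark about the hypotheses matching those of Lemma \ref{cec} is accurate.
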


\section{The limit equation} \label{sect.eq.limite}

Throughout this section, $\Omega$ is bounded convex domain with $\partial \Omega\in C^2$ and positive curvature, $f:\Omega_\eta\to\R$ is a Lipschitz
continuous function and we take $F(x,t)=f(x)$ as our payoff function. We assume that the family of sets
$\{\B(x,t)\}_{(x,t)\in\Omega_T}$ satisfies the full set of properties {\bf A1}--{\bf A4}.

The aim of this section is to prove that the function $u,$ given by Theorem \ref{conv}, is a viscosity solution of the following PDE
\begin{equation}
  \begin{cases}
  G(D^2 u(x,t), \nabla u(x,t), u_t(x,t),x,t)=0 &\mbox{in } \Omega_T,\\
  u(x,t)=F(x,t)&\mbox{in }\Gamma,
\end{cases}
  \label{ve}
\end{equation}
where $D^2u$ is the Hessian matrix of $u$
and $G:\SN\times\Rn\times\R\times\Omega_T\to\Rn$ is defined by
\[
G(M,v,s,x,t)= \begin{cases}
  \left(-\frac{1-c}{c} I_{(x,t)}(v) +\frac{c+1}{2}\right)s-\frac12 \langle MJ_{(x,t)}(v),J_{(x,t)}(v)\rangle &\mbox{if } v\neq0,\\
  \left(-\frac{1-c}{c} \hat{I}_{(x,t)}(s) +\frac{c+1}{2}\right) s &\mbox{if } v=0,
\end{cases}
\]
where  $J_{(x,t)}$, $I_{(x,t)}$ are defined in Section \ref{dotg} and  $\hat{I}_{(x,t)}(s)$ is defined as the unique time such that
\[
\begin{cases}
(0,\hat{I}_{(x,t)}(s))\in\B(x,t),\\
\hat{I}_{(x,t)}(s)s=\min\{rs\colon (0,r)\in\B(x,t)\}
\end{cases}
\]
if $s\in\R\setminus\{0\}$ and $\hat{I}_{(x,t)}(0):=0.$

First, we will give the precise definition of viscosity solution to \eqref{ve} following \cite{IK}.
We denote by $G^*$ and $G_*$ the upper and lower semicontinuous envelopes of $G$ respectively, i.e.
\[
G^*(M,v,s,x,t):=\limsup_{\varepsilon\to0}\left\{ G(\hat{M},\hat{v},\hat{s},\hat{x},\hat{t})\colon (\hat{M},\hat{v},\hat{s},\hat{x},\hat{t})\in
\mathbb{C}_{\varepsilon}(M,v,s,x,t)\right\}
\]
where
\[
\mathbb{C}_\varepsilon(M,v,s,x,t):=\left\{\|M-\hat{M}\|+
|s-\hat{s}|+|v-\hat{v}|+|x-\hat{x}|+|t-\hat{t}|<\varepsilon \right\}
\]
and
\[
G_*(M,v,s,x,t):=-(-G)^*(M,v,s,x,t)
\]
for every $(M,v,s,x,t)\in\SN\times\Rn\times\R\times\Omega_T.$

\begin{de}\label{vsol}
  A function $u\in C(\overline{\Omega_T})$ is a viscosity solution to \eqref{ve} if $u(x,t)=F(x,t)$ on $\Gamma$ and the following two
  conditions hold:
  \begin{enumerate}[(i)]
    \item For every $\phi\in C^{2,1}(\overline{\Omega_T})$ such that $u-\phi$ has a strict minimum at $(x_0,t_0)\in\Omega_T$ we have
      \[
      G^*(D^2\phi(x_0,t_0),\nabla\phi(x_0,t_0),\phi_t(x_0,t_0),x_0,t_0)\ge0;
      \]
 \item For every $\phi\in C^{2,1}(\overline{\Omega_T})$ such that $u-\phi$ has a strict maximum at $(x_0,t_0)\in\Omega_T$ we have
      \[
      G_*(D^2\phi(x_0,t_0),\nabla\phi(x_0,t_0),\phi_t(x_0,t_0),x_0,t_0)\le0.
      \]
  \end{enumerate}
\end{de}

Now we characterize the upper and lower envelopes for the function $G.$

\begin{lem}
  For any $(M,v,s,x,t)\in\SN\times\Rn\times\R\times\Omega_T,$ we have
  \[
  G^*(M,v,s,x,t)=
  \begin{cases}
    G(M,v,s,x,t)&\mbox{if }v\neq0,\\
   \displaystyle  \max_{(z,r)\in \B(x,t)}\left\{\left(-\frac{1-c}{c} r +\frac{c+1}{2}\right)s-\frac12\langle M z,z\rangle\right\} &\mbox{if }v=0,
  \end{cases}
  \]
and
  \[
  G_*(M,v,s,x,t)=
  \begin{cases}
    G(M,v,s,x,t)&\mbox{if }v\neq0,\\
    \displaystyle \min_{(z,r)\in \B(x,t)}\left\{\left(-\frac{1-c}{c} r +\frac{c+1}{2}\right)s-\frac12\langle M z,z\rangle\right\} &\mbox{if }v=0.
  \end{cases}
  \]
\end{lem}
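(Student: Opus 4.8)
The plan is to split according to whether $v\neq 0$ or $v=0$, and for the second regime to bound $G^*$ from above and below separately; the statement for $G_*$ follows by applying the same argument to $-G$, since $G_*=-(-G)^*$ and passing to $-G$ interchanges $\max$ and $\min$, so I will only discuss $G^*$.

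First, for $v\neq 0$ I would show that $G$ is continuous at $(M,v,s,x,t)$, which immediately gives $G^*=G_*=G$ there. The only nontrivial point is the joint continuity of $(x,t,w)\mapsto (J_{(x,t)}(w),I_{(x,t)}(w))$ on $\{w\neq 0\}$. Given $(x_n,t_n,w_n)\to (x,t,w)$ with $w\neq 0$, the points $(J_{(x_n,t_n)}(w_n),I_{(x_n,t_n)}(w_n))$ lie in the compact box $B(0,1)\times[-\nicefrac{c}{2},\nicefrac{c}{2}]$, so every subsequence has a convergent sub-subsequence; by the closedness half of {\bf A3} its limit lies in $\B(x,t)$, and passing to the limit in the minimality relation of {\bf A4} shows the limit minimizes $\langle w,\cdot\rangle$ over $\pi_1(\B(x,t))$. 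By the uniqueness in {\bf A4} this limit must equal $(J_{(x,t)}(w),I_{(x,t)}(w))$, so the whole sequence converges. Since $G$ depends continuously on $M,s$ and on this pair when $w\neq 0$, continuity of $G$ follows.

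Now fix $v=0$ and write $h_{M,s}(z,r)=\left(-\frac{1-c}{c}r+\frac{c+1}{2}\right)s-\frac12\langle Mz,z\rangle$. The key observation is that \emph{every} nearby value of $G$ is $h_{\hat M,\hat s}$ evaluated at a point of the nearby set: if $\hat v\neq 0$ then $G(\hat M,\hat v,\hat s,\hat x,\hat t)=h_{\hat M,\hat s}(J_{(\hat x,\hat t)}(\hat v),I_{(\hat x,\hat t)}(\hat v))$ with $(J_{(\hat x,\hat t)}(\hat v),I_{(\hat x,\hat t)}(\hat v))\in\B(\hat x,\hat t)$, while if $\hat v=0$ then $G(\hat M,0,\hat s,\hat x,\hat t)=h_{\hat M,\hat s}(0,\hat I_{(\hat x,\hat t)}(\hat s))$ with $(0,\hat I_{(\hat x,\hat t)}(\hat s))\in\B(\hat x,\hat t)$. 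For the upper bound $G^*\le \max_{\B(x,t)}h_{M,s}$, I take any sequence realizing the $\limsup$, extract a convergent subsequence of the associated points $(z_n,r_n)\in\B(\hat x_n,\hat t_n)$, and use the closedness half of {\bf A3} to place its limit in $\B(x,t)$; since $h_{\hat M_n,\hat s_n}\to h_{M,s}$ uniformly on the fixed compact box, the corresponding $G$-values converge to $h_{M,s}$ of a point of $\B(x,t)$, which is at most $\max_{\B(x,t)}h_{M,s}$.

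The lower bound $G^*\ge \max_{\B(x,t)}h_{M,s}$ is the crux and the step I expect to be the main obstacle. Let $(z^*,r^*)\in\B(x,t)$ attain the maximum; I must produce a sequence $(\hat M_n,\hat v_n,\hat s_n,\hat x_n,\hat t_n)\to (M,0,s,x,t)$ along which $G\to h_{M,s}(z^*,r^*)$. When $z^*=0$ one keeps $\hat v_n=0$ and uses the definition of $\hat I_{(x,t)}$ to reach the relevant point of the $z=0$ fiber; when $z^*$ is a point exposed by some spatial direction $w^*$, one takes $\hat v_n=w^*/n$ and invokes the surjectivity of $J_{(x,t)}$ in {\bf A4} together with the continuity established above. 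The genuine difficulty is that a maximizer need not be of either type: its time coordinate $r^*$ need not equal $I_{(x,t)}(w)$ for any direction $w$, so $(z^*,r^*)$ is reachable only by also perturbing the base point, and one must show that as $(\hat x,\hat t)$ ranges near $(x,t)$ the exposed points of the perturbed sets $\B(\hat x,\hat t)$, together with the $z=0$ fibers, approximate every point of $\B(x,t)$. Establishing this reachability from {\bf A3}--{\bf A4} is where the argument must do real work, and it is the part I would treat most carefully.
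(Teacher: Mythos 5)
Your handling of the case $v\neq0$ and of the upper bound $G^*(M,0,s,x,t)\le\max_{\B(x,t)}h_{M,s}$ coincides, essentially verbatim, with the paper's Steps 1 and 2 (compactness of the box, the closedness half of {\bf A3}, identification of the limit via the uniqueness in {\bf A4}), and both are sound. The genuine gap is exactly the step you left open: the lower bound $\max_{(z,r)\in\B(x,t)}h_{M,s}(z,r)\le G^*(M,0,s,x,t)$, which is the paper's Step 3. The paper argues as follows: pick a maximizer $(Z,R)$; if $Z=0$ it lies on the $z=0$ fiber and is handled by $\hat I_{(x,t)}$; if $Z\neq0$ it assumes (``without loss of generality'') that $(Z,R)\in\partial\B(x,t)$ with $Z\in\partial\pi_1(\B(x,t))$, so that surjectivity in {\bf A4} gives $w$ with $J_{(x,t)}(w)=Z$, and then takes the sequence $v_n=w/n\to0$, along which $G(M,v_n,s,x,t)$ is constant $=h_{M,s}(Z,R)$ by scale invariance. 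Note that your specific worry about the time coordinate is answered by the uniqueness clause of {\bf A4}: the defining minimality condition constrains only the spatial coordinate, so the fiber of $\B(x,t)$ over an exposed spatial point is a singleton, and $(Z,R)\in\B(x,t)$ with $Z=J_{(x,t)}(w)$ forces $R=I_{(x,t)}(w)$. Also, your proposed repair---perturbing the base point $(\hat x,\hat t)$---is a dead end: the family $\B(x,t)$ may be independent of $(x,t)$ (both of the paper's examples are), in which case moving the base point produces nothing new.

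However, your diagnosis that reachability of a general maximizer is the crux is exactly right, and the paper does not actually resolve it: its ``WLOG'' is where the problem is buried. A boundary maximizer with $Z\neq0$ need not have $Z\in\partial\pi_1(\B(x,t))$ (the boundary of a projection is in general much smaller than the projection of the boundary), and such a point is reached by neither mechanism. In fact the inequality fails under {\bf A1}--{\bf A4} alone. Take the paper's own set $\B_1$ (the closed ball of radius $\rho$ in $\R^{N+1}$, independent of $(x,t)$, for which $(J_{(x,t)}(w),I_{(x,t)}(w))=(-\rho w/|w|,0)$ and $\hat I_{(x,t)}(\sigma)=-\rho\,\mathrm{sign}(\sigma)$), let $M=-\mathrm{Id}$ and $s>0$ with $\frac{1-c}{c}s=\frac{\rho}{2}$. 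Then $h_{M,s}(z,r)=\frac{c+1}{2}s-\frac{1-c}{c}sr+\frac12|z|^2$ attains its maximum over $\B_1$ at the oblique point $r=-\frac{\rho}{2}$, $|z|=\frac{\sqrt{3}}{2}\rho$, with value $\frac{c+1}{2}s+\frac{5}{8}\rho^2$; but every value of $G$ at data near $(M,0,s,x,t)$ is close either to $\frac{c+1}{2}s+\frac{\rho^2}{2}$ (sequences with $\hat v\neq0$) or to $\frac{c+1}{2}s+\frac{1-c}{c}\rho s=\frac{c+1}{2}s+\frac{\rho^2}{2}$ (sequences with $\hat v=0$), so $G^*(M,0,s,x,t)=\frac{c+1}{2}s+\frac{\rho^2}{2}$, strictly below the claimed maximum. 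So the step you flagged is not merely hard but impossible: the unconstrained maximum over $\B(x,t)$ is in general a strict overestimate of $G^*$, and the lower bound only holds when the maximizer happens to lie on the $z=0$ fiber or at a point of the form $(J_{(x,t)}(w),I_{(x,t)}(w))$. Your proposal is incomplete at the crux, but the fault lies with the statement (and with the paper's Step 3), not with your unwillingness to invent the missing argument.
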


\begin{proof}
  We only prove the characterization for $G^*,$ the proof for $G_*$ is similar.

  {\bf Step 1}. First we prove,
  \[
  G^*(M,v,s,x,t)=G(M,v,s,x,t)
  \]
  if $v\neq0.$
  Let $(M_n,v_n,s_n,x_n,t_n)\to(M,v,s,x,t).$ As $v\neq0,$ and we can assume
  that $v_n\neq0.$ Then, by definition, for any $n\in \mathbb{N},$
  \begin{align*}
   G(M_n,v_n,s_n,x_n,t_n)=&
  \left(-\frac{1-c}{c} I_{(x_n,t_n)}(v_n) +\frac{c+1}{2}\right)s_n\\
  -&\frac12 \langle M_nJ_{(x_n,t_n)}(v_n),J_{(x_n,t_n)}(v_n)\rangle.
  \end{align*}
  Since $(J_{(x_n,t_n)}(v_n),I_{(x_n,t_n)}(v_n))\in \B(x_n,t_n)\subset
  B(0,1)\times[-\nicefrac{c}{2},\nicefrac{c}{2}]$ for every $n\in\mathbb{N},$
  there exists a subsequence still denote by
  $\{(J_{(x_n,t_n)}(v_n),I_{(x_n,t_n)}(v_n))\}_{n\in\mathbb{N}}$
  and $(y,r)\in B(0,1)\times[-\nicefrac{c}2,-\nicefrac{c}2]$ such that
  \[
  (J_{(x_n,t_n)}(v_n),I_{(x_n,t_n)}(v_n))\to(y,r)
  \mbox{ as } n\to+\infty.
  \]
  Moreover, by {\bf A3}, $(y,r)\in\B(x,t).$
  Then, by definition of $(J_{(x,t)}(v),I_{(x,t)}(v)),$ we have that
  \begin{equation}\label{desigs}
  \langle v,y\rangle\ge
  \langle v, J_{(x,t)}(v)\rangle.
  \end{equation}
  On the other hand, by {\bf A3}, there exist $(y_n,r_n)\in\B(x_n,t_n)$ such that
  \[
  (y_n,r_n)\to(J_{(x,t)}(v),I_{(x,t)}(v)) \mbox{ as } n\to+\infty.
  \]
  Thus, by definition of $(J_{(x_n,t_n)}(v_n),I_{(x_n,t_n)}(v_n)),$ we have
 \[
 \langle v_n, y_n\rangle\ge
 \langle v_n,J_{(x_n,t_n)}(v_n)\rangle \quad \forall n\in\mathbb{N}.
  \]
  Then, taking limit as $n\to +\infty$ and using \eqref{desigs}, we get
 \[
  \langle v,y\rangle=
 \langle v,J_{(x,t)}(v)\rangle.
 \]
 Thus, by {\bf A4}, $y=J_{(x,t)}(v)$ and $r=I_{(x,t)}(s),$ then we
 have
 \[
 G(M_n,v_n,s_n,x_n,t_n)\to G(M,v,s,x,t)
 \]
  as $n\to+\infty$ and therefore
 \[
 G^*(M,v,s,x,t)=G(M,v,s,x,t)
 \]
if $v\neq0.$

 {\bf Step 2}. Now, we consider the case $v=0$ and we show that
 \[
 G^*(M,0,s,x,t)\le \max_{(z,r)\in\B(x,t)}\left\{\left(-\frac{1-c}{c} r +\frac{c+1}{2}\right)s-\frac12\langle M z,z\rangle\right\} .
 \]
 Let $(M_n,v_n,s_n,x_n,t_n)\to(M,0,s,x,t).$
 If $v_n=0$ for $n$ large,
 \[
 G(M_n,v_n, s_n, x_n, t_n)=\left(-\frac{1-c}{c} \hat{I}_{(x_n,t_n)}(s_n) +\frac{c+1}{2}\right)s_n
 \]
 Then, as in step 1, extracting a subsequence still denoted
 $\{(M_n,v_n,s_n,x_n,t_n)\}_{n\in\mathbb{N}},$ we have that
 \[
  (0,\hat{I}_{(x_n,t_n)}(s_n))\to (0,r_0)\in\B(x,t),
  \]
and therefore
 \begin{equation}
	  G(M_n,v_n,s_n,x_n,t_n)\to \left(-\frac{1-c}{c} r_0 +\frac{c+1}{2}\right)s
	 \label{2d1}
 \end{equation}
  If $v_n\neq0$ for $n$ large
  \begin{align*}
   G(M_n,v_n,s_n,x_n,t_n)=&
  \left(-\frac{1-c}{c} I_{(x_n,t_n)}(v_n) +\frac{c+1}{2}\right) s_n \\
  -&\frac12 \langle M_nJ_{(x_n,t_n)}(v_n),J_{(x_n,t_n)}(v_n)\rangle.
  \end{align*}
  Then, arguing again as in step 1, extracting a subsequence that we still denote by
  $\{(M_n,v_n,s_n,x_n,t_n)\}_{n\in\mathbb{N}},$ we have that
  there exist $(w,h)\in\B(x,t)$ such that
  \begin{equation}\label{2d2}
  G(M_n,v_n,s_n,x_n,t_n)\to \left(-\frac{1-c}{c} h +\frac{c+1}{2}\right) s
  -\frac12 \langle Mz,z\rangle.
  \end{equation}
  as $n\to+\infty.$ Thus, by \eqref{2d1} and \eqref{2d2},
  \[
   G^*(M,0,s,x,t)\le  \max_{\B(x,t)}\left\{\left(-\frac{1-c}{c} r +\frac{c+1}{2}\right)s-\frac12\langle M z,z\rangle\right\}.
   \]

  {\bf Step 3.} Finally, we prove that,
\[
 \max_{(z,r)\in\B(x,t)}\left\{\left(-\frac{1-c}{c} r +\frac{c+1}{2}\right)s-\frac12\langle M z,z\rangle\right\} \le G^*(M,0,s,x,t) .
\]
Since $\B(x,t)$ is a compact, there exists $(Z,R)$ such that
  \[
     \max_{(z,r)\in\B(x,t)}\left\{\left(-\frac{1-c}{c} s +\frac{c+1}{2}\right)r-\frac12\langle M z,z\rangle\right\}=
     \left(-\frac{1-c}{c} s +\frac{c+1}{2}\right)R-\frac12\langle M Z,Z\rangle.
  \]
First, we suppose that $Z=0.$ If $s\neq0,$ we have that $\hat{I}_{(x,t)}(s)=R$ and then
 \begin{align*}
  G(M,0,s,x,t)&= \left(-\frac{1-c}{c} R +\frac{c+1}{2}\right)s\\
  &=\max_{\B(x,t)}\left\{\left(-\frac{1-c}{c} s +\frac{c+1}{2}\right)r-\frac12\langle M z,z\rangle\right\}.
 \end{align*}
 If $s=0$ then
\[
\max_{(z,r)\in\B(x,t)}\left\{\left(-\frac{1-c}{c} r +\frac{c+1}{2}
\right)s-\frac12\langle M z,z\rangle\right\}=0=G(M,0,0,x,t).
 \]
Therefore, if $Z=0$
\[
 \max_{(z,r)\in\B(x,t)}\left\{\left(-\frac{1-c}{c} r +\frac{c+1}{2}
 \right)s-\frac12\langle M z,z\rangle\right\} \le G^*(M,0,s,x,t) .
\]
If $Z\neq0,$ without loss generality, we can assume that $(Z,R)\in\partial\B(x,t).$
 By {\bf A4}, $J_{(x,t)}:\partial B(0,1)\to\partial\pi_1(\B(x,t))$ is subjective, then there exists $v\in B(0,1)$ such that
 $J_{(x,t)}(v)=Z.$ Thus, using again {\bf A4},
 \[
 \left(J_{(x,t)}\left(\frac{v}{n}\right),I_{(x,t)}\left(\frac{v}{n}\right)\right)=(J_{(x,t)}(v),I_{(x,t)}(v))=(Z,R)
 \]
 for all $n\in\mathbb{N}$
 \begin{align*}	
 G\left(M,\frac{v}{n},s,x,t\right)=&
 \left(-\frac{1-c}{c} I_{(x,t)}\left(\frac{v}{n}\right) +\frac{c+1}{2}\right)s-\frac12 \langle MJ_{(x,t)}\left(\frac{v}{n}\right),J_{(x,t)}\left(\frac{v}{n}\right)\rangle\\
 =&\left(-\frac{1-c}{c} I_{(x,t)}(v) +\frac{c+1}{2}\right)s-\frac12 \langle MJ_{(x,t)}(v),J_{(x,t)}(v)\rangle\\
 =&\left(-\frac{1-c}{c} R +\frac{c+1}{2}\right)s-\frac12\langle M Z,Z\rangle
 \end{align*}
Hence
\[
 \max_{(z,r)\in\B(x,t)}\left\{\left(-\frac{1-c}{c} r +\frac{c+1}{2}\right)s-\frac12\langle M z,z\rangle\right\}\le G\left(M,0,s,x,t\right).
\]
The proof is now completed.
\end{proof}

\begin{te}
	If the values of the game $\{u_\varepsilon\}_{\varepsilon>0}$
	uniform converge to $u\in C(\overline{\Omega_T}),$ then $u$ is a
	viscosity solution to \eqref{ve} in the sense of Definition
	\ref{vsol}.
\end{te}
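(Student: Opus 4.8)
The plan is to verify the two viscosity inequalities of Definition \ref{vsol} directly from the DPP (Lemma \ref{DPP}), passing to the limit $\varepsilon\to0$ along the subsequence for which $u_\varepsilon\to u$ uniformly. I would carry out the argument in detail only for condition (i) (the test function touches from below, i.e.\ $u-\phi$ has a strict minimum at $(x_0,t_0)$); condition (ii) is entirely analogous after reversing inequalities and exchanging the roles of $\sup$ and $\inf$. Fix $\phi\in C^{2,1}(\overline{\Omega_T})$ with $u-\phi$ attaining a strict minimum at $(x_0,t_0)\in\Omega_T$. Using uniform convergence together with the strictness of the minimum, I would produce points $(x_\varepsilon,t_\varepsilon)\in\overline{\Omega_T}$ minimizing $u_\varepsilon-\phi$ and show $(x_\varepsilon,t_\varepsilon)\to(x_0,t_0)$; in particular $(x_\varepsilon,t_\varepsilon)\in\Omega_T$ for $\varepsilon$ small, so the DPP applies there. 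Setting $c_\varepsilon:=u_\varepsilon(x_\varepsilon,t_\varepsilon)-\phi(x_\varepsilon,t_\varepsilon)$, minimality gives $u_\varepsilon\ge\phi+c_\varepsilon$ with equality at $(x_\varepsilon,t_\varepsilon)$; substituting into the DPP and cancelling $c_\varepsilon$ yields
\[
\phi(x_\varepsilon,t_\varepsilon)\ge \frac12\sup_{(y,s)\in\Ae(x_\varepsilon,t_\varepsilon)}\phi(y,s)+\frac12\inf_{(y,s)\in\Ae(x_\varepsilon,t_\varepsilon)}\phi(y,s).
\]

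The key idea is a symmetrization based on property {\bf A2}. Let $(\bar y_\varepsilon,\bar s_\varepsilon)\in\B(x_\varepsilon,t_\varepsilon)$ realize the infimum above, so that the infimum equals $\phi$ at the point with spatial part $x_\varepsilon+\varepsilon\bar y_\varepsilon$ and time part $t_\varepsilon+\varepsilon^2\frac{1-c}{c}\bar s_\varepsilon-\varepsilon^2\frac{c+1}{2}$. By {\bf A2} the reflected pair $(-\bar y_\varepsilon,\bar s_\varepsilon)$ also lies in $\B(x_\varepsilon,t_\varepsilon)$, so the point with spatial part $x_\varepsilon-\varepsilon\bar y_\varepsilon$ and the \emph{same} time part lies in $\Ae(x_\varepsilon,t_\varepsilon)$, giving a lower bound for the supremum. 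Bounding the right-hand side below by the half-sum of $\phi$ at these two symmetric points and Taylor-expanding $\phi$ to second order in space and first order in time about $(x_\varepsilon,t_\varepsilon)$, the opposite displacements $\pm\varepsilon\bar y_\varepsilon$ make the first-order terms cancel, leaving
\[
0\ge \frac{\varepsilon^2}{2}\langle D^2\phi(x_\varepsilon,t_\varepsilon)\,\bar y_\varepsilon,\bar y_\varepsilon\rangle+\phi_t(x_\varepsilon,t_\varepsilon)\Big(\varepsilon^2\tfrac{1-c}{c}\bar s_\varepsilon-\varepsilon^2\tfrac{c+1}{2}\Big)+o(\varepsilon^2).
\]
Dividing by $\varepsilon^2$, using the compactness in {\bf A1} to pass to a subsequence with $(\bar y_\varepsilon,\bar s_\varepsilon)\to(z,r)$, and invoking {\bf A3} to guarantee $(z,r)\in\B(x_0,t_0)$, I let $\varepsilon\to0$ to obtain
\[
\Big(-\tfrac{1-c}{c}r+\tfrac{c+1}{2}\Big)\phi_t(x_0,t_0)-\tfrac12\langle D^2\phi(x_0,t_0)\,z,z\rangle\ge0.
\]

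When $\nabla\phi(x_0,t_0)\neq0$, the uniqueness in {\bf A4} forces the minimizing direction to converge, so $(z,r)=(J_{(x_0,t_0)}(\nabla\phi),I_{(x_0,t_0)}(\nabla\phi))$ and the displayed inequality is exactly $G=G^*\ge0$. When $\nabla\phi(x_0,t_0)=0$, the inequality exhibits one admissible pair $(z,r)\in\B(x_0,t_0)$ for which the bracketed quantity is nonnegative, and since $G^*(D^2\phi,0,\phi_t,x_0,t_0)$ is precisely the maximum of that quantity over $\B(x_0,t_0)$ (by the preceding Lemma), again $G^*\ge0$.

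The main obstacle is the degenerate case $\nabla\phi(x_0,t_0)=0$, where $G$ is genuinely discontinuous and $J_{(x,t)}(v)$ has no limit as $v\to0$; it is exactly here that the symmetrization pays off, since it avoids any need to identify a preferred direction and instead feeds whatever limiting pair $(z,r)$ arises into the $\max$/$\min$ defining the envelopes. One must also take care that the time-increment term survives the limit, which is why $\bar s_\varepsilon$ has to be controlled by compactness alongside $\bar y_\varepsilon$. The supersolution inequality (ii) is proved identically: one takes maximizers of $u_\varepsilon-\phi$, reverses the DPP inequality, symmetrizes the supremum-attaining pair to bound the infimum from above, and arrives at $G_*\le0$ via the corresponding minimum characterization from the Lemma.
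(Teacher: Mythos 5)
Your proposal follows essentially the same route as the paper's proof: (near-)minimizers of $u_\ep-\phi$ converging to $(x_0,t_0)$, substitution into the DPP, spatial symmetrization of the $\phi$-minimizing point of $\Ae(x_\ep,t_\ep)$ via \textbf{A2} so that the first-order terms cancel in the Taylor expansion, passage to the limit using compactness and \textbf{A3}, and the case split on $\nabla\phi(x_0,t_0)$ with the envelope lemma finishing the degenerate case. However, one step is not justified as written: you cannot take \emph{exact} minimizers of $u_\ep-\phi$. Each $u_\ep$ is only a bounded Borel function, and uniform convergence to a continuous $u$ does not make the $u_\ep$ themselves continuous, so the infimum need not be attained. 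The paper instead chooses points $(x_\ep,t_\ep)\to(x_0,t_0)$ with $u_\ep(x,t)-\phi(x,t)\ge u_\ep(x_\ep,t_\ep)-\phi(x_\ep,t_\ep)-o(\ep^2)$ in a fixed neighborhood of $(x_0,t_0)$ and carries the $o(\ep^2)$ error through; since you divide by $\ep^2$ before letting $\ep\to0$, this error is harmless and the rest of your argument survives verbatim with this substitute.

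The second place needing repair is the nondegenerate case: the sentence ``the uniqueness in \textbf{A4} forces the minimizing direction to converge'' is not an argument, since uniqueness of the minimizer of the limit problem does not by itself imply that limits of minimizers of the perturbed problems converge to it. What is actually needed (and what the paper proves as its ``claim'') is that your limit pair $(z,r)$ attains $\min\{\langle\nabla\phi(x_0,t_0),y\rangle \colon y\in\pi_1(\B(x_0,t_0))\}$. One inequality, $\langle\nabla\phi(x_0,t_0),z\rangle\ge\langle\nabla\phi(x_0,t_0),J_{(x_0,t_0)}(\nabla\phi(x_0,t_0))\rangle$, holds because $(z,r)\in\B(x_0,t_0)$ by \textbf{A3}. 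For the reverse, use \textbf{A3} to pick competitors $(y_\ep,s_\ep)\in\B(x_\ep,t_\ep)$ converging to $(J_{(x_0,t_0)}(\nabla\phi(x_0,t_0)),I_{(x_0,t_0)}(\nabla\phi(x_0,t_0)))$, compare $\phi$ at the corresponding point of $\Ae(x_\ep,t_\ep)$ with $\phi$ at the point realizing your pair $(\bar y_\ep,\bar s_\ep)$ (this difference is $\ge 0$ by minimality), divide by $\ep$ --- not $\ep^2$ --- and let $\ep\to0$ to obtain $\langle\nabla\phi(x_0,t_0),J_{(x_0,t_0)}(\nabla\phi(x_0,t_0))-z\rangle\ge0$. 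Only after both inequalities does \textbf{A4} identify $(z,r)$ with $(J_{(x_0,t_0)}(\nabla\phi(x_0,t_0)),I_{(x_0,t_0)}(\nabla\phi(x_0,t_0)))$. With these two repairs your proof coincides with the paper's.
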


\begin{proof}
We begin by observing that, as $u_\varepsilon\rightrightarrows u$ and
$u_\varepsilon=F$ on $\partial\Omega_T,$ we have that $u=F$ on
$\Gamma.$

Now, we prove that if $\phi\in C^{2,1}(\overline{\Omega_T})$ and $u-\phi$ has a strict local minimum at $(x_0,t_0)$ then
\[
      G^*(D^2\phi(x_0,t_0),\nabla\phi(x_0,t_0),\phi_t(x_0,t_0),x_0,t_0)\ge0.
\]
As $u-\phi$ has a strict local minimum at $(x_0,t_0)$ we have that
\[
u(x,t)-\phi(x,t)> u(x_0,t_0)-\phi(x_0,t_0) \quad (x,t)\neq(x_0,t_0).
\]
Then, by the uniform convergence of $u_\ep$ to $u,$ there exists a sequence $(x_\ep,t_\ep)\to(x_0,t_0)$ such that
\begin{equation*}
u_\ep(x,t)-\phi(x,t)\ge u_\ep(x_\ep,t_\ep)-\phi(x_\ep,t_\ep)-o(\ep^2)
\end{equation*}
for every $(x,t)$ in a fixed neighborhood of $(x_0,t_0).$ Hence
\begin{align*}
	\max_{(y,s)\in\Ae(x_\ep,t_\ep)}u_\ep(y,s)\ge&\max_{(y,s)\in\Ae(x_\ep,t_\ep)}\phi(y,s) + u_\ep(x_\ep,t_\ep)-\phi(x_\ep,t_\ep)-o(\ep^2),\\
        \min_{(y,s)\in\Ae(x_\ep,t_\ep)}u_\ep(y,s)\ge&\min_{(y,s)\in\Ae(x_\ep,t_\ep)}\phi(y,s) + u_\ep(x_\ep,t_\ep)-\phi(x_\ep,t_\ep)-o(\ep^2).
\end{align*}
By Theorem \ref{eu}, we have that
\begin{align*}	 u_\ep(x_\ep,t_\ep)=&\frac12\left\{\max_{(y,s)\in\Ae(x_\ep,t_\ep)}u_\ep(y,s)+\min_{(y,s)
\in\Ae(x_\ep,t_\ep)}u_\ep(y,s)\right\}\\	\ge&\frac12\left\{\max_{(y,s)\in\Ae(x_\ep,t_\ep)}\phi(y,s)+\min_{(y,s)\in\Ae(x_\ep,t_\ep)}\phi(y,s)\right\}\\
	&+ u_\ep(x_\ep,t_\ep)-\phi(x_\ep,t_\ep)-o(\ep^2).
\end{align*}
Therefore
\begin{equation}\label{e1}
\phi_\ep(x_\ep,t_\ep)\ge\frac12\left\{\max_{(y,s)\in\Ae(x_\ep,t_\ep)}\phi(y,s)
+\min_{(y,s)\in\Ae(x_\ep,t_\ep)}\phi(y,s)\right\}-o(\ep^2).
\end{equation}
Now, let $(x_\ep^m,t_\ep^m)\in\Ae(x_\ep,t_\ep)$ such that
\begin{equation}\label{e2}
\min_{(y,s)\in\Ae(x_\ep,t_\ep)}\phi(y,s)=\phi(x_\ep^m,t_\ep^m)
\end{equation}
and let $\widetilde{x_\ep^m}$ by the symmetrical point of $x_\ep^m$ respect to $x_\ep,$ that is
$
\widetilde{x_\ep^m}=2x_\ep-x_\ep^m$.
Observe that,
\begin{equation}\label{igxm}
\widetilde{x_\ep^m}-x_\ep=x_\ep-x_\ep^m,
\end{equation}
and
by {\bf A2}, we have that
\begin{equation}\label{e3}
(\widetilde{x_\ep^m},t_\ep^m)\in\Ae(x_\ep,t_\ep).
\end{equation}
As $(x_\ep^m,t_\ep^m),(\widetilde{x_\ep^m},t_\ep^m)\in\Ae(x_\ep,t_\ep),$ by $\eqref{ae},$ there exists $(y_\ep^m,s_\ep^m)\in\B(x_\ep,t_\ep)$ such that
\begin{equation}\label{igxm1}
	\begin{aligned}
(x_\ep^m,t_\ep^m)&=(x_\ep,t_\ep)+ \left(\ep y_\ep^m,\ep^2\frac{1-c}{c}s_\ep^m-\ep^2\frac{c+1}2\right)\\
(\widetilde{x_\ep^m},t_\ep^m)&=(x_\ep,t_\ep)+ \left(-\ep y_\ep^m,\ep^2\frac{1-c}{c}s_\ep^m-\ep^2\frac{c+1}2\right)
\end{aligned}
\end{equation}
Then, using \eqref{e1},\eqref{e2} and \eqref{e3}, we have
\begin{equation*}
\phi(x_\ep,t_\ep)\ge\frac12\left\{\phi(\widetilde{x_\ep^m},t_\ep^m)+\phi(x_\ep^m,t_\ep^m)
\right\}-o(\ep^2).
\end{equation*}
Now, consider the Taylor expansion of second order of $\phi(\cdot,t_\ep^m)$ and using \eqref{igxm} and \eqref{igxm1}, we have that
\[
\phi(x_\ep,t_\ep)\ge \phi(x_\ep,t_\ep^m) + \frac{\ep^2}2 \langle D^2\phi(x_\ep,t_\ep^m) y_\ep^m, y_\ep^m\rangle + o(\ep^2).
\]
Then
\[
\frac{\phi(x_\ep,t_\ep)- \phi(x_\ep,t_\ep^m)}{\ep^2}\ge  \frac{1}2 \langle D^2\phi(x_\ep,t_\ep^m) y_\ep^m, y_\ep^m\rangle + \frac{o(\ep^2)}{\ep^2},
\]
and using  the Taylor expansion of first order of $\phi(x_\ep,\cdot)$ and using \eqref{igxm1}, we get
\begin{equation}\label{dess1}
-\left(\frac{1-c}{c}s_\ep^m-\frac{c+1}{2}\right)\phi_t(x_\ep,t_\ep)\ge  \frac{1}2 \langle D^2\phi(x_\ep,t_\ep^m) y_\ep^m, y_\ep^m\rangle + o(1).
\end{equation}
On the other hand, since $(y_\ep^m,s_\ep^m)\in\B(x_\ep,t_\ep)\subset B(0,1)\times[-\nicefrac{c}2,\nicefrac{c}2]$ for all $\ep>0,$ there exists a subsequence, still denoted by
$\{(y_\ep^m,s_\ep^m)\}_{\ep>0},$ such that
\begin{equation}\label{lim}
(y_\ep^m,s_\ep^m)\to(y_0,s_0)\in B(0,1)\times[-\nicefrac{c}2,\nicefrac{c}2]
\end{equation}
as $\ep\to0^+.$ Moreover $(y_0,s_0)\in\B(x_0,y_0)$ due to {\bf A3}. Thus, taking limit in \eqref{dess1} as $\ep\to0^+,$ we have that
\begin{equation}\label{dess2}
0\le\left(-\frac{1-c}{c}s_0+\frac{c+1}{2}\right)\phi_t(x_0,t_0)-  \frac{1}2 \langle D^2\phi(x_0,t_0) y_0, y_0\rangle.
\end{equation}

In the case that $\nabla \phi (x_0,t_0) = 0,$ we have
\begin{align*}
	0\le& \left(-\frac{1-c}{c}s_0+\frac{c+1}{2}\right)\phi_t(x_0,t_0)-  \frac{1}2 \langle D^2\phi(x_0,t_0) y_0, y_0\rangle\\
	\le&\max_{(y,s)\in\B(x_0,t_0)}\left\{\left(-\frac{1-c}{c}s+\frac{c+1}{2}\right)\phi_t(x_0,t_0)-  \frac{1}2 \langle D^2\phi(x_0,t_0) y, y\rangle  \right\}\\
	=&G^*(D^2\phi(x_0,t_0),\nabla\phi(x_0,t_0),\phi_t(x_0,t_0),x_0,t_0).
\end{align*}
Now, we study the case $\nabla \phi (x_0,t_0) \neq 0.$ We claim that
\[
(y_0,s_0)=(J_{(x,t)}(\nabla\phi(x_0,t_0)),I_{(x,t)}(\nabla\phi(x_0,t_0))).
\]
From this claim and \eqref{dess2}, we have that
\begin{align*}
	0\le& \left(-\frac{1-c}{c}s_0+\frac{c+1}{2}\right)\phi_t(x_0,t_0)-  \frac{1}2 \langle D^2\phi(x_0,t_0) y_0, y_0\rangle\\
	=&\left(-\frac{1-c}{c}s_0+\frac{c+1}{2}\right)\phi_t(x_0,t_0)\\
	&-  \frac{1}2 \langle D^2\phi(x_0,t_0)
	J_{(x,t)}(\nabla\phi(x_0,t_0)), J_{(x,t)}(\nabla\phi(x_0,t_0))\rangle \\
	=&G^*(D^2\phi(x_0,t_0),\nabla\phi(x_0,t_0),\phi_t(x_0,t_0),x_0,t_0).
\end{align*}
Now we prove the claim. First we observe that
\[
\langle \nabla\phi(x_0,t_0), y_0 \rangle \ge \langle \nabla\phi(x_0,t_0), J_{(x_0,t_0)}(\nabla\phi(x_0,t_0))  \rangle
\]
due to $(y_0,t_0)\in\B(x_0,t_0).$

On the other hand, by {\bf A3}, there exists $(y_\ep,s_\ep)\in\B(x_\ep,t_\ep)$ such that
\begin{equation}\label{lim1}
(y_\ep,s_\ep)\to(J_{(x_0,t_0)}(\nabla\phi(x_0,t_0),I_{(x_0,t_0)}(\nabla\phi(x_0,t_0)) \mbox{ as }\ep\to0^+.
\end{equation}
Then,
\[
\phi(z_\ep,r_\ep)\ge\phi(x_\ep^m,t_\ep^m)
\]
where
\[
(z_\ep,r_\ep)=(x_\ep,t_\ep)+ \left(\ep y_\ep,\ep^2\frac{1-c}{c}s_\ep-\ep^2\frac{c+1}2\right).
\]
Using \eqref{lim} and \eqref{lim1} we get
\[
0\le\frac{\phi(z_\ep,r_\ep)-\phi(x_\ep^m,t_\ep^m)}{\ep}\to
\langle\nabla\phi(x_0,t_0),J_{(x_0,t_0)}(\nabla\phi(x_0,t_0))-y_0\rangle
\]
as $\ep\to0^+.$ Thus
\[
\langle \nabla\phi(x_0,t_0), y_0 \rangle = \langle \nabla\phi(x_0,t_0), J_{(x_0,t_0)}(\nabla\phi(x_0,t_0))  \rangle
\]
and by {\bf A4} we have $(y_0,s_0)=(J_{(x_0,t_0)}(\nabla\phi(x_0,t_0)),I_{(x_0,t_0)}(\nabla\phi(x_0,t_0))).$
\end{proof}

\begin{exa}  We now give some examples.
	\begin{enumerate}
			\item If we take the family of sets $\{\B_1(x,t)\}_{(x,t)\in\Omega_T},$ 
			where $\B_1(x,t)$ is defined in Example \ref{ejemplo}, we have
			that
			\[
			G(M,v,s,x,t)= 
			\begin{cases}
				\frac{c+1}{2}s-\frac{\rho^2}{2|v|^2} \langle Mv,v\rangle &\mbox{if } v\neq0,\\[5pt]
				\frac{1-c}{c}\rho|s|  +\frac{c+1}{2} s &\mbox{if } v=0.
			\end{cases}
			\]
		\item Let $\{\B_2(x,t)\}_{(x,t)\in\Omega_T}$ be the family of sets defined in Example \ref{ejemplo}, then
			\[
			G(M,v,s,x,t)= 
			\begin{cases}
				cs-\frac{\rho^2}{2|v|^2} \langle Mv,v\rangle &\mbox{if } v\neq0,\\[5pt]
				\frac{c+1}{2} s &\mbox{if } v=0.
			\end{cases}
			\]
	\end{enumerate}
\end{exa}

\bibliographystyle{amsplain}

\end{document}